\numberwithin{equation}{section}
\theoremstyle{plain}
\newtheorem{theorem}[equation]{Theorem}
\newtheorem{corollary}[equation]{Corollary}
\newtheorem{lemma}[equation]{Lemma}
\newtheorem{question}[equation]{Question}
\theoremstyle{definition}
\newtheorem{definition}[equation]{Definition}
\theoremstyle{definition}
\newtheorem{remark}[equation]{Remark}
\theoremstyle{definition}
\theoremstyle{definition}
\newtheorem{example}[equation]{Example}
\theoremstyle{definition}
\title{\scshape\bfseries $r$-skeletons on the Alexandroff duplicate}
\author{{\bfseries S. Garc\'ia-Ferreira}}
\address{Centro de Ciencias Matem\'aticas\\
         Universidad Nacional Aut\'onoma de M\'exico\\
				 Campus Morelia\\
         Apartado Postal 61-3, Santa Mar\'ia, 58089, Morelia, Michoac\'an, M\'exico.}
\email{sgarcia@matmor.unam.mx, novo1126@hotmail.com}
\author{{\bfseries C. Yescas-Aparicio}}
\subjclass[2010]{Primary 54D30, 54C15}
\keywords{ $r$-skeleton, Alexandroff Duplicate, Inverse Limits, Zero-dimensional spaces}
\date{}
\thanks{Research of the first-named author was supported
by CONACYT grant no. 176202 and PAPIIT grant no. IN-105318 }
\begin{document}

\begin{abstract} An $r$-skeleton on a compact space is a family of continuous retractions  having certain rich properties. The $r$-skeletons have  been used to characterized the Valdivia compact spaces and the Corson compact spaces. Here, we characterized  a compact space with an $r$-skeleton, for which the given $r$-skeleton can be extended to an $r$-skeleton on  the Alexandroff Duplicate of the given space. Besides, we prove that if $X$ is a zero-dimensional compact space without isolated points and $\{r_s:s\in \Gamma\}$ is an $r$-skeleton on $X$, then there is $s\in \Gamma$  such that $cl(r_s[X])$ is not countable.
\end{abstract}

\maketitle

\section{Introduction and preliminaries}

 Our spaces will be Hausdorff. The Greek letter $\omega$ will stand for the first infinite cardinal number. Given an infinite set  $X$,  the symbol $[X]^{\leq \omega}$ will denote the set of all countable  subsets of $X$ and the meaning of $[X]^{< \omega}$ should be clear.     A partially ordered set $\Gamma$ is {\it up-directed} whenever for every $s,s'\in \Gamma$, there is $t\in \Gamma$ such that $s\leq t$ and $s'\leq t$. And $\Gamma$ is $\sigma$-{\it complete} if $\sup_{n<\omega}s_n\in \Gamma$, for each increasing sequence $ \langle s_n\rangle_{n<\omega} \subseteq \Gamma$.  If $X$ is a space and $A\subseteq X$, then the closure of $A$ in $X$ will be denoted either by $\overline{A}$ or $cl_X(A)$. A space is called {\it cosmic} if it has a countable network. We recall the reader that a compact space is cosmic iff it is metrizable and separable.

 \medskip
In \cite{kubis1}, the authors introduced the notion of a {\it $r$-skeleton }, which is a family of retractions. This concept emerged as the dual notion of projectional skeletons in Banach spaces (see \cite{kubis}).

\begin{definition}\label{skeleton}
Let  $X$ be a space. An $r$-{\it skeleton} on $X$ is a family $\{r_s:s\in \Gamma\}$ of retractions in $X$, indexed by an up-directed $\sigma$-complete partially ordered set $\Gamma$, such that
\begin{enumerate}
\item[$(i)$] $r_s(X)$ is a cosmic space, for each $s\in \Gamma$;

\item[$(ii)$] $r_s=r_s\circ r_t=r_t\circ r_s$ whenever $s\leq t$;

\item[$(iii)$] if $\{s_n\}_{n<\omega}\subseteq \Gamma$, $s_n\leq s_{n+1}$ for each $n<\omega$ and $s=\sup \{s_n : n<\omega\}$, then $r_s(x)=\lim_{n\rightarrow \infty}r_{s_n}(x)$ for each $x\in X$; and

\item[$(iv)$] for each $x\in X$, $x=\lim_{s\in \Gamma}r_s(x)$.
\end{enumerate}
The {\it induced subspace} on $X$ defined by $\{r_s: s\in \Gamma\}$ is  $\bigcup_{s\in \Gamma}r_s(X)$. If $X=\bigcup_{s\in \Gamma}r_s(X)$, then we will say that $\{r_s:s\in \Gamma\}$ is a \textit{full $r$-skeleton}. Besides, if $r_s\circ r_t= r_t\circ r_s$, for any $s,t\in \Gamma$, then we will say that  $\{r_s:s\in \Gamma\}$ is \textit{commutative}.
\end{definition}

It is shown in  \cite{cuth1}  that the Corson compact spaces are those compact spaces which have a  full $r$-skeleton, and in the  paper \cite{kubis1} it is proved that the Valdivia compact spaces are  those compact spaces which   have a commutative $r$-skeleton. The ordinal space $[0,\omega_1]$ is a compact space with $r$-skeleton, but does not admit neither a commutative nor a full $r$-skeleton. Thus,   the class of spaces with $r$-skeletons contains properly the Valdivia  compact spaces. In the literature, some other authors use the name  \textit{retractional skeleton} instead of $r$-skeleton, and the name \textit{non commutative Valdivia compact space} is used to address to a  compact space with an $r$-skeleton.

\medskip

 The $r$-skeletons have been  very important  in the study of certain topological properties of compact spaces (see, for instance, \cite{cuth2}). As example of this, in the next theorem,  we will highlight the  basic properties of the induced subspace.

\begin{theorem}[Kubi\'s,\cite{kubis}]
Let $X$ be a compact space with an $r$-skeleton and $Y\subseteq X$ the  induced space by the $r$-skeleton. Then:
\begin{itemize}
\item $Y$ is  countably closed in $X$, i. e. for any $B\in [Y]^{\leq \omega}$ we have $\overline{B}\subseteq Y$,
\item  $Y$ is a Frechet-Urysohn space and
\item $\beta Y = X$.
\end{itemize}
\end{theorem}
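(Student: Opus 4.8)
The plan is to prove the three assertions about the induced subspace $Y=\bigcup_{s\in\Gamma}r_s(X)$ in the order: countable closedness, density together with $\beta Y=X$, and finally the Fréchet--Urysohn property, which I expect to be the delicate part. Throughout I will use two elementary consequences of the axioms in Definition~\ref{skeleton}. First, by $(ii)$, if $s\le t$ then $r_t$ fixes $r_s(X)$ pointwise, whence $r_s(X)\subseteq r_t(X)$ and the images form a monotone family along $\Gamma$. Second, each $r_s(X)$ is a compact (hence closed) metrizable subspace of $X$, since it is a continuous image of the compactum $X$ and is cosmic by $(i)$.

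For countable closedness I would start from a countable $B=\{y_n:n<\omega\}\subseteq Y$, pick $t_n$ with $y_n\in r_{t_n}(X)$, and use up-directedness to build an increasing sequence $s_n\ge t_0,\dots,t_n$; by $\sigma$-completeness $s=\sup_n s_n\in\Gamma$. Monotonicity gives $y_n\in r_{t_n}(X)\subseteq r_s(X)$ for every $n$, so $B\subseteq r_s(X)$. As $r_s(X)$ is closed, $\overline B\subseteq r_s(X)\subseteq Y$, which is exactly the claim.

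For $\beta Y=X$, density of $Y$ is immediate: by $(iv)$ the net $(r_s(x))_{s\in\Gamma}$ lies in $Y$ and converges to $x$, so $\overline Y=X$ and $X$ is a compactification of $Y$. It then remains to show $Y$ is $C^\ast$-embedded. Given continuous $f\colon Y\to[0,1]$, I would define a candidate extension by $\tilde f(x)=\lim_{s\in\Gamma}f(r_s(x))$. The key point is convergence of this net: along any increasing sequence $s_n$ with $s=\sup_n s_n$, property $(iii)$ gives $r_{s_n}(x)\to r_s(x)$ inside $Y$, so continuity of $f$ forces $f(r_{s_n}(x))\to f(r_s(x))$; a short interleaving argument using directedness then shows that all cluster values of the net coincide, so the limit exists. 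For $x\in Y$ the net is eventually constant equal to $f(x)$, hence $\tilde f$ extends $f$; continuity of $\tilde f$ is checked using $(iii)$ and $(iv)$, and the universal property of $\beta Y$ then yields $\beta Y=X$.

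The Fréchet--Urysohn property is where I expect the real difficulty. The natural reduction is: given $A\subseteq Y$ and $x\in\overline A\cap Y$, it suffices to produce a single $s$ with $x\in\operatorname{cl}(A\cap r_s(X))$, for then metrizability of $r_s(X)$ delivers a sequence in $A\cap r_s(X)$ converging to $x$. To find such an $s$ I would recursively choose, at stage $n$, a countable $A_n\subseteq A$ whose images $r_{s_n}[A_n]$ accumulate at $x$ inside $r_{s_n}(X)$, then enlarge to $s_{n+1}\ge s_n$ with $A_n\subseteq r_{s_{n+1}}(X)$, and finally pass to $s=\sup_n s_n$ and $B=\bigcup_n A_n$. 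The main obstacle is that controlling the \emph{projections} $r_{s_n}[A_n]$ near $x$ does not by itself place the original points of $A_n$ near $x$: the retractions $r_{s_n}$ converge to the identity on $r_s(X)$ only pointwise, not uniformly, so a crude diagonal choice can leave the selected points bounded away from $x$. Overcoming this requires building the approximating points and the indices $s_n$ simultaneously, so that property $(iii)$ at the limit $s$ together with the net convergence $(iv)$ pins the actual points (not merely their images) down to $x$. This is the step I would expect to carry out via a careful back-and-forth recursion, or, in Kubi\'s's style, via an elementary-submodel argument choosing $s$ associated with a countable $M\prec H(\theta)$ containing $A$ and $x$.
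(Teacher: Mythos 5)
The paper itself gives no proof of this statement: it is quoted verbatim as Kubi\'s's theorem from \cite{kubis}, so your attempt can only be judged against the standard argument in the literature, and judged that way it is incomplete in two essential places. What you do prove is correct: the countable-closedness argument is complete (monotonicity of images from $(ii)$, an increasing sequence dominating the $t_n$'s, $\sigma$-completeness, and closedness of the compact metrizable set $r_s(X)$), density of $Y$ follows from $(iv)$, and your interleaving argument for the existence of the pointwise limit $\tilde f(x)=\lim_{s\in\Gamma}f(r_s(x))$ is valid: two distinct cluster values of the net would yield an increasing sequence $\langle s_n\rangle$ along which $f(r_{s_n}(x))$ oscillates, contradicting $(iii)$ applied at $s=\sup_n s_n$ together with continuity of $f$ on $Y$.

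The first genuine gap is the sentence ``continuity of $\tilde f$ is checked using $(iii)$ and $(iv)$.'' This is not a routine verification: $\tilde f$ is a pointwise limit of the continuous functions $f\circ r_s$, and pointwise limits are in general discontinuous; moreover the convergence $r_s(x)\to x$ is not uniform, and $X$ can fail countable tightness (e.g.\ $[0,\omega_1]$ carries an $r$-skeleton), so one cannot reduce continuity at a point $x\in X\setminus Y$ to countable subsets of $Y$. Establishing continuity (equivalently, verifying the zero-set criterion for $\beta Y=X$: disjoint zero sets of $Y$ have disjoint closures in $X$) requires another recursion of the same depth as the one you postpone, interleaving choices of indices $s_n$ and witnessing points and passing to $s=\sup_n s_n$. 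The second gap is the Fr\'echet--Urysohn property itself: you correctly reduce it to finding one $s$ with $x\in \mathrm{cl}(A\cap r_s(X))$, and you correctly identify the obstruction (after choosing $a_n\in A_n$ with $r_{t_n}(a_n)$ close to $x$, pointwise convergence $r_{t_n}\to \mathrm{id}$ on $r_s(X)$ does not let you pass from the images to the points themselves), but you stop at announcing a ``careful back-and-forth recursion'' or an elementary-submodel argument without carrying either out. The standard way to close this, in Kubi\'s's style, is to exploit that on the compact metrizable space $r_s(X)$ the retractions $r_{t_n}\!\restriction r_s(X)$ converge pointwise to the identity, hence quasi-uniformly (Osgood--Arzel\`a), which is exactly the uniformity surrogate your sketch lacks. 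As written, your proposal fully proves only the first bullet; the other two bullets rest on the two unproven steps above.
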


A way to inherit the $r$-skeletons of a space to its closed subspaces  is the following.

\begin{theorem}[M. Cuth, \cite{cuth1}]\label{teocuth1}
Let $X$ be a compact space with $r$-skeleton $\{r_s:s\in \Gamma\}$ and $F$ be a closed subset of $X$. If $Y\cap F$ is dense in $F$, then $F$ admits an $r$-skeleton.
\end{theorem}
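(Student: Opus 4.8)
The plan is to build the $r$-skeleton on $F$ by restricting the \emph{full} retractions attached to countable elementary submodels. Fix a large regular cardinal $\theta$ and let $\mathcal M$ be the family of countable elementary submodels $M\prec H(\theta)$ with $X$, the topology, $\Gamma$, the family $\{r_s:s\in\Gamma\}$, the induced space $Y$ and the set $F$ all in $M$. For such an $M$ the trace $\Gamma\cap M$ is countable and up-directed, hence has a cofinal increasing sequence $\langle b_n\rangle_{n<\omega}$; by $\sigma$-completeness $s_M:=\sup_n b_n\in\Gamma$, and by $(iii)$ one has $r_{s_M}(x)=\lim_n r_{b_n}(x)$ for all $x$. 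Two facts are used throughout: first, $r_{s_M}(X)=\overline{Y\cap M}=:K_M$, which is metrizable (cosmic by $(i)$ and compact); second, $M\cap X\subseteq\mathrm{Fix}(r_{s_M})=K_M$, since for $w\in M\cap X$ property $(iv)$ holds inside $M$ and forces $r_{b_n}(w)\to w$, so $r_{s_M}(w)=w$. Also $r_s[X]\subseteq K_M$ whenever $s\le s_M$, because such a fixed point $q=r_s(q)$ satisfies $r_{s_M}(q)=r_{s_M}(r_s(q))=r_s(q)=q$. Ordering $\mathcal M$ by inclusion gives an up-directed, $\sigma$-complete poset (the union of a countable increasing chain of such submodels is again one), and $M\subseteq N$ gives $s_M\le s_N$, so $(ii)$ descends to the maps $r_{s_M}$.

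The heart of the matter is the invariance claim $r_{s_M}[F]=\overline{(Y\cap F)\cap M}$ for $M\in\mathcal M$. Granting it, set $q_M:=r_{s_M}\restriction F$. Then $q_M:F\to F$ is a continuous retraction whose image $\overline{(Y\cap F)\cap M}$ is cosmic (a closed subset of the metrizable $K_M$), giving $(i)$. The induced subspace is $\bigcup_{M\in\mathcal M}q_M[F]=\bigcup_M\overline{(Y\cap F)\cap M}$, which equals $Y\cap F$: it is contained in $Y\cap F$ because $Y\cap F$ is countably closed in $X$ (the intersection of the countably closed $Y$ with the closed $F$) while each $(Y\cap F)\cap M$ is countable, and it contains $Y\cap F$ because every point of $Y\cap F$ lies in some $M\in\mathcal M$. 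Thus the induced subspace is dense in $F$, as required. Finally $(ii)$--$(iv)$ for $\{q_M:M\in\mathcal M\}$ are inherited by restriction: $(ii)$ from $r_{s_M}=r_{s_M}\circ r_{s_N}=r_{s_N}\circ r_{s_M}$ when $M\subseteq N$, $(iii)$ from $(iii)$ on $X$ together with $\sigma$-completeness of $\mathcal M$, and $(iv)$ from $x=\lim_s r_s(x)$ read along the cofinal subfamily of $\mathcal M$. Hence $\{q_M:M\in\mathcal M\}$ is an $r$-skeleton on $F$.

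I expect the invariance claim to be the only genuine obstacle; the rest is bookkeeping with $(i)$--$(iv)$. To prove it I would fix $x\in F$, put $p:=r_{s_M}(x)\in K_M$, and show $p\in\overline{(Y\cap F)\cap M}$ (which lies in $F$); the reverse inclusion is clear since $r_{s_M}$ fixes $(Y\cap F)\cap M$ pointwise. Fix a metric $d\in M$ on $K_M$ and a rational $\varepsilon>0$, choose $s\in\Gamma\cap M$ with $d(r_s(x),p)<\varepsilon$ (possible as $r_{b_n}(x)\to p$), and pick $c\in Y\cap M$ with $d(c,r_s(x))<\varepsilon$ using density of $Y\cap M$ in $K_M$. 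The sentence ``there is $w\in F$ with $d(r_s(w),c)<\varepsilon$'' has all parameters $s,F,d,c,\varepsilon$ in $M$ and is true in $H(\theta)$ (witnessed by $x$), so by elementarity it has a witness $w\in F\cap M$, for which $r_s(w)\in(Y\cap F\text{-candidate})\cap M$ lies within $2\varepsilon$ of $p$.

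The delicate point — and exactly where the hypothesis is used — is upgrading these $M$-witnesses to approximants of $p$ that actually lie in $F$, so as to conclude $p\in\overline{(Y\cap F)\cap M}$. This is where the density of $Y\cap F$ in $F$ is indispensable, in tandem with the Kubi\'s theorem quoted above: $Y$ is countably closed and Fr\'echet--Urysohn, so $Y\cap F$ is as well, which lets one realise the relevant accumulation by \emph{sequences} inside $F$ while $s_M$, absorbing the whole of $\Gamma\cap M$, guarantees that the closed set $F\in M$ is respected by the limit retraction $r_{s_M}$ even though no single $r_{b_n}$ need preserve $F$. Carrying out this accumulation rigorously is the technical core of the argument.
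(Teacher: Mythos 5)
Your overall strategy --- realizing C\'uth's family of $F$-preserving retractions via countable elementary submodels, i.e.\ showing that the limit retractions $r_{s_M}$ with $F\in M$ map $F$ into $F$, and then restricting --- is exactly the route the paper records for this theorem (see Remark \ref{obscuth}: the subfamily $\Gamma'=\{s\in\Gamma: r_s(F)\subseteq F\}$ is $\sigma$-closed and cofinal, and $\{r_s\upharpoonright_F : s\in\Gamma'\}$ is the skeleton on $F$; your $s_M$ are intended witnesses of that cofinality). But your writeup contains one false lemma and one genuine, admitted hole, so as it stands the theorem is not proved. The false lemma is $M\cap X\subseteq\mathrm{Fix}(r_{s_M})$: elementarity relativizes condition $(iv)$ only to neighbourhoods that are \emph{elements} of $M$, and these need not form a base at $w$, so you only get that $r_{b_n}(w)$ eventually enters every neighbourhood of $w$ lying in $M$ --- not convergence to $w$. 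Concretely, for $X=[0,\omega_1]$ with the Kubi\'s--Michalewski skeleton (so $Y=[0,\omega_1)$), the point $\omega_1=\max X$ is definable from $X\in M$ and hence belongs to every $M$, yet $r_{s_M}(\omega_1)=\sup(M\cap\omega_1)<\omega_1$. What is true, and all you need for the easy inclusion $\overline{(Y\cap F)\cap M}\subseteq r_{s_M}[F]$, is $Y\cap M\subseteq\mathrm{Fix}(r_{s_M})$: for $y\in Y\cap M$ elementarity gives $t\in\Gamma\cap M$ with $r_t(y)=y$, whence $r_{b_n}(y)=y$ once $b_n\geq t$. The hole is the hard inclusion $r_{s_M}[F]\subseteq\overline{(Y\cap F)\cap M}$, which you yourself flag as ``the technical core'' and leave uncarried: your elementarity step produces witnesses $r_s(w)$ lying in $Y\cap M$ near $p$, but since no individual $r_s$ need map $F$ anywhere near $F$, these points need not approximate $p$ \emph{from within} $F$; the argument merely re-derives $p\in\overline{Y\cap M}=K_M$, which you already had. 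The appeal to $Y$ being Fr\'echet--Urysohn cannot bridge this, since sequential accessibility of limits inside $Y$ says nothing about the limit lying in $F$. (A smaller slip: there is no ``metric $d\in M$ on $K_M$'', because $K_M$ is not an element of $M$; one must instead fix, inside $M$, metrics or countable bases for the spaces $r_s[X]$, $s\in\Gamma\cap M$.)

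The missing idea that closes the gap is the inverse-limit structure of $K_M$ --- precisely the viewpoint of Section 2 of the paper. Since $z=\lim_n r_{b_n}(z)$ for every $z\in K_M$, the diagonal map embeds $K_M$ into $\prod_{n<\omega} r_{b_n}[X]$, so the sets $\{z\in K_M : r_s(z)\in B\}$, with $s\in\Gamma\cap M$ and $B$ taken from a countable base of $r_s[X]$ chosen in $M$, form a base of $K_M$. This converts ``find a point of $(Y\cap F)\cap M$ near $p=r_{s_M}(x)$'' into a coordinatewise problem: by $(ii)$ one has $r_s(p)=r_s(x)\in r_s[F]\subseteq\overline{r_s[Y\cap F]}$ (continuity of $r_s$ plus density of $Y\cap F$ in $F$), so the statement ``there is $c\in Y\cap F$ with $r_s(c)\in B$'' is true in $H(\theta)$ with all parameters in $M$; elementarity yields such a $c\in (Y\cap F)\cap M$, and then $r_{s_M}(c)=c$, so $c\in K_M$ lies in the chosen basic neighbourhood of $p$. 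Hence $p\in\overline{(Y\cap F)\cap M}\subseteq F$, which is your invariance claim. The same mechanism, run without models (interleave choices of $s_n$ and countable $C_n\subseteq Y\cap F$ with $r_{s_n}[C_n]$ dense in $r_{s_n}[F]$ and with $r_{s_{n+1}}$ fixing $C_0\cup\dots\cup C_n$ pointwise), proves cofinality of $\Gamma'$ directly. Once invariance is in hand, your verification of $(i)$--$(iv)$ is essentially correct bookkeeping; note that $(iv)$ for the restricted family follows at once from cofinality of $\{s_M : M\in\mathcal M\}$ in $\Gamma$, so Lemma \ref{rey2} is not even needed there.
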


An interesting research topic has been the study of the connection between the topological properties of a space and its Alexandroff duplicate notions:

\smallskip

{\it Remember that the \textit{Alexandroff duplicate} of a space $X$, denoted by $AD(X)$,  is  the space $X \times \{0,1\}$ with the topology in which all points of $X \times \{1\}$ are isolated, and the basic neighbourhoods of the points $(x,0)$ are of the form $(U \times \{0,1\}) \setminus \{(x,1)\}$ where $U$ is a neighbourhood of $x\in X$. We denote by $X_i$ the subspace of $AD(X)$, $X\times \{i\}$. We denote by $\pi$ the projection from $AD(X)$ onto $X$. We remember that $X$ is homeomorphic to $X_0$ which, in some cases, will be identified with $X$.}

\smallskip

\noindent Following this topic, one can wonder whether an $r$-skeleton on a space can be extended to an $r$-skeleton to its Alexandroff duplicate. By using  Theorem \ref{teocuth1}, J. Somaglia   pointed out that an $r$-skeleton on an Alexandroff duplicate can  be inherited to the base space:

\begin{theorem}[\cite{soma1}]\label{soma2}
Let $X$ be a compact space. If $AD(X)$ has an $r$-skeleton, then $X$ has an $r$-skeleton.
\end{theorem}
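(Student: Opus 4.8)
The plan is to realize $X$ as the closed subspace $X_0=X\times\{0\}$ of $AD(X)$ and to invoke Cuth's Theorem \ref{teocuth1}. So let $\{r_s:s\in\Gamma\}$ be the given $r$-skeleton on the compact space $AD(X)$, write $M_s=r_s(AD(X))$, and let $Y=\bigcup_{s\in\Gamma}M_s$ be its induced subspace. By the Kubi\'s theorem above, $\beta Y=AD(X)$, so $Y$ is dense in $AD(X)$; in particular $Y$ contains every isolated point of $AD(X)$, whence $X_1\subseteq Y$. Since $X_0$ is closed and $\pi|_{X_0}\colon X_0\to X$ is a homeomorphism, Theorem \ref{teocuth1} will yield an $r$-skeleton on $X_0\cong X$ as soon as I show that $Y\cap X_0$ is dense in $X_0$, which is the heart of the argument.

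To prove density, fix a nonempty open $U\subseteq X$ and a point $c\in U$; I will produce some $(y,0)\in Y$ with $y\in U$. Using regularity of $X$, choose an open $W$ with $c\in W\subseteq\overline W\subseteq U$. By axiom (iv) the net $(r_s((c,0)))_{s\in\Gamma}$ converges to $(c,0)$ in $AD(X)$. If $r_s((c,0))\in X_0$ for cofinally many $s$, say $r_s((c,0))=(e_s,0)$ along a cofinal set of indices, then $(e_s,0)\to(c,0)$ forces $e_s\to c$, so $e_s\in U$ eventually; any such point $(e_s,0)\in M_s\subseteq Y$ does the job.

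The remaining case is that $r_s((c,0))\in X_1$ for all large $s$, say $r_s((c,0))=(d_s,1)$. Then $(d_s,1)\to(c,0)$ gives $d_s\to c$ with $d_s\neq c$ eventually, and $d_s\in W$ for large $s$. Because a convergent net with values $\neq c$ cannot be eventually constant, every tail of $(d_s)$ has infinite range, so I can select an increasing sequence $s_1\le s_2\le\cdots$ (all large, with $d_{s_n}\in W$) along which the $d_{s_n}$ are pairwise distinct. Setting $s_\omega=\sup_n s_n\in\Gamma$ (using $\sigma$-completeness), axiom (iii) gives $r_{s_\omega}((c,0))=\lim_n(d_{s_n},1)$. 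Thus the sequence $(d_{s_n},1)$ converges; applying $\pi$ we get $d_{s_n}\to y$ for some $y\in\overline W\subseteq U$, and since the $(d_{s_n},1)$ are distinct isolated points their limit cannot be isolated, so $r_{s_\omega}((c,0))=(y,0)\in X_0$. Hence $(y,0)\in M_{s_\omega}\subseteq Y$ with $y\in U$. In both cases $Y\cap(U\times\{0\})\neq\emptyset$, so $Y\cap X_0$ is dense in $X_0$, and Theorem \ref{teocuth1} finishes the proof.

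I expect this last case to be the main obstacle. From axiom (iv) one has only the convergence of a net indexed by the (possibly uncountable, non-first-countable) directed set $\Gamma$, and if $X$ fails to be countably tight at $c$ one cannot simply extract a sequence in $X$ converging to $c$. The device that bypasses this is to push an increasing sequence through axiom (iii) and let the supremum retraction $r_{s_\omega}$ fabricate a genuine limit point of $X_0$; the two points needing care are checking that this limit lands in $X_0$ rather than back in $X_1$ (guaranteed by the distinctness of the $d_{s_n}$) and that its first coordinate stays inside $U$ (guaranteed by the choice $\overline W\subseteq U$).
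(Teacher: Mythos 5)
Your proof is correct and takes exactly the route the paper indicates for this theorem: realize $X$ as the closed subspace $X_0$ of $AD(X)$ and apply C\'uth's Theorem \ref{teocuth1}, the only substantive work being the density of $Y\cap X_0$ in $X_0$, which you establish soundly (in particular, pushing an increasing sequence with pairwise distinct values $d_{s_n}$ through $\sigma$-completeness and axiom (iii) so that $r_{s_\omega}((c,0))$ is forced into $X_0$ is a valid way to handle the case where the net $r_s((c,0))$ is eventually in $X_1$). This matches the argument the paper attributes to \cite{soma1}, so there is nothing essentially different to compare.
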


With respect to the reciprocal implication of the previous theorem,
 Garc\'ia-Ferreira and Rojas-Hern\'andez  gave  the following partial answer.

\begin{theorem}[\cite{reynaldo1}]\label{reycorson}
If  $X$ is a compact space with full $r$-skeleton, then $AD(X)$ has a full $r$-skeleton.
\end{theorem}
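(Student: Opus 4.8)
The plan is to build the desired full $r$-skeleton on $AD(X)$ by duplicating the given one along countable sets of points that are allowed to survive as isolated points. Write $K_s=r_s(X)$, and recall from axiom $(ii)$ that $r_s=r_t\circ r_s$ whenever $s\le t$, so that $K_s=r_t(K_s)\subseteq K_t$; thus the $K_s$ form an increasing net of cosmic compacta. As index set I would take
\[
\Gamma'=\{(s,C): s\in\Gamma,\ C\in[r_s(X)]^{\le\omega}\},
\]
ordered by $(s,C)\le(t,D)$ iff $s\le t$ and $C\subseteq D$, and for each $(s,C)\in\Gamma'$ define $R_{s,C}\colon AD(X)\to AD(X)$ by
\[
R_{s,C}(x,i)=\begin{cases}(x,1)&\text{if }i=1\text{ and }x\in C,\\(r_s(x),0)&\text{otherwise.}\end{cases}
\]
The idea is that $R_{s,C}$ collapses $X_0$ and the non-selected part of $X_1$ onto a copy of $K_s$ sitting inside $X_0$, while keeping alive exactly the countably many isolated points indexed by $C$.

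First I would check that $\Gamma'$ is up-directed and $\sigma$-complete: given $(s,C),(s',C')$, pick $t\ge s,s'$ in $\Gamma$, so $C\cup C'\subseteq K_t$ and $(t,C\cup C')$ is an upper bound; and for an increasing sequence $(s_n,C_n)$ the pair $(\sup_n s_n,\bigcup_n C_n)$ lies in $\Gamma'$ because $\bigcup_n C_n$ is countable and contained in $K_{\sup_n s_n}$. Next, that each $R_{s,C}$ is a retraction: idempotence and the fixed-point property on the image are immediate from $r_s\circ r_s=r_s$. The point requiring care is \emph{continuity} at the ground points $(x,0)$. If a net $(x_\alpha,1)$ of surviving isolated points (so $x_\alpha\in C$) converges to $(x,0)$, then $x_\alpha\to x$ in $X$ and $R_{s,C}(x_\alpha,1)=(x_\alpha,1)\to(x,0)$, whereas $R_{s,C}(x,0)=(r_s(x),0)$; these agree only if $x\in K_s$. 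This is exactly why I impose $C\subseteq r_s(X)$: since $K_s$ is closed, every limit point of $C$ lies in $K_s$ and is fixed by $r_s$, and continuity in the remaining cases follows from continuity of $\pi$ and $r_s$. The image is $M_{s,C}=(K_s\times\{0\})\cup(C\times\{1\})$, the compactum obtained from the metrizable compact space $K_s$ by duplicating the countably many points of $C$; combining a countable base of $K_s$ with the singletons $\{(y,1)\}$, $y\in C$ (allowing finitely many of them to be deleted) produces a countable base of $M_{s,C}$, so $M_{s,C}$ is second countable, hence metrizable and separable, i.e. cosmic, giving axiom $(i)$.

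The compatibility axiom $(ii)$ for $\{R_{s,C}\}$ reduces by a short case analysis to $r_s=r_s\circ r_t=r_t\circ r_s$ together with $C\subseteq D$; axiom $(iii)$ reduces similarly to axiom $(iii)$ for $\{r_s\}$ on the $X_0$-part and to the fact that a point $x\in\bigcup_n C_n$ enters some $C_n$ at a finite stage and stays, so $R_{s_n,C_n}(x,1)$ is eventually $(x,1)$. The decisive step is axiom $(iv)$, and this is where fullness enters. For a ground point $(x,0)$, fullness gives some $s_0$ with $x\in K_{s_0}$, and then $R_{s,\emptyset}(x,0)=(r_s(x),0)\to(x,0)$ along the cofinal set of $s\ge s_0$. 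For an isolated point $(x,1)$ we need $R_{s,C}(x,1)=(x,1)$ eventually, i.e. eventually $x\in C$; this is possible precisely because fullness provides $s_0$ with $x\in K_{s_0}=r_{s_0}(X)$, so that $(s_0,\{x\})\in\Gamma'$ and every $(s,C)\ge(s_0,\{x\})$ has $x\in C$. The same observation shows $\bigcup_{(s,C)}M_{s,C}=AD(X)$, so the skeleton is full.

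The main obstacle is the tension visible in the continuity computation: each retraction must have cosmic, hence second countable, image, so it can retain only countably many of the continuum-many isolated points of $AD(X)$, and continuity forces those retained points to lie in $r_s(X)$. Thus the real content is that the allowed ``reservoirs'' $r_s(X)$ must between them exhaust all of $X$, which is exactly the hypothesis that the original $r$-skeleton is full (equivalently, that $X$ is Corson). Without fullness some $x$ would belong to no $K_s$, its duplicate $(x,1)$ could never be legitimately selected, and axiom $(iv)$ would fail for $(x,1)$; so fullness is not a convenience here but the precise obstruction the construction is designed to exploit.
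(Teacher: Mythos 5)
Your proof is correct, but note that the paper itself does not prove Theorem \ref{reycorson} --- it is imported from \cite{reynaldo1}; the closest in-paper argument is the sufficiency half of Theorem \ref{principal} together with Lemma \ref{lemma51}, and your construction is exactly that machinery specialized to the full case. When the skeleton is full the induced space is $Y=X$, so $[X\setminus Y]^{\leq\omega}=\{\emptyset\}$, conditions $(*)$ and $(**)$ are vacuous, and the paper's retraction $R_{(A,B,s)}$ with $B=\emptyset$ is precisely your $R_{s,C}$ with $C=A$; your continuity argument (split the net between $X_0$, the surviving points of $C\times\{1\}$, and the rest of $X_1$, using that $C\subseteq r_s(X)$ forces limit points of $C$ to be fixed by $r_s$) mirrors the proof of Lemma \ref{lemma51}. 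The one genuine difference is bookkeeping: the paper first reindexes the skeleton by countable sets via Lemma \ref{rey1} and Theorem \ref{numerables}, obtaining $A\subseteq R_A(X)$ (condition $(***)$) and a $\sigma$-closed cofinal index set $\Gamma\subseteq[Y]^{\leq\omega}\times[X\setminus Y]^{\leq\omega}$, whereas you carry the index $s$ explicitly in pairs $(s,C)$ with $C\in[r_s(X)]^{\leq\omega}$, which makes $\sigma$-completeness and condition $(iv)$ for the isolated points immediate without the monotone-map lemma; this is a cosmetic but slightly more self-contained route. Your closing observation that fullness is exactly what lets every duplicated point $(x,1)$ be legitimately retained (since $x$ must lie in some $r_s(X)$ for $(s,\{x\})$ to be an admissible index) correctly isolates where the hypothesis enters, and it is consistent with the paper's Theorem \ref{principal}, where in the non-full case the deficit set $X\setminus Y$ must instead satisfy the discreteness and cosmicity conditions $(*)$ and $(**)$. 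Two tiny imprecisions, neither a gap: for ground points $(x,0)$, axiom $(iv)$ already follows from $x=\lim_{s}r_s(x)$ without fullness (your eventual-constancy argument via $x\in K_{s_0}$ is stronger than needed); and for cosmicity of the image $(K_s\times\{0\})\cup(C\times\{1\})$ a countable network suffices, so the countable-base discussion can be shortened.
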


 Theorem \ref{reycorson} means that the Corson property is preserved under the Alexandroff Duplicate.  J. Somaglia in \cite{soma1} showed that if
  $X$ is  a compact space with  an $r$-skeleton and $Y$ the induced space such that $X\setminus Y$ is finite, then $AD(X)$ admits an $r$-skeleton.
 This result of  Somaglia motivated this paper in which we give a necessary and sufficient conditions on the $r$-skeleton of a space $X$, in order that its Alexandroff duplicated $AD(X)$  admits an $r$-skeleton. All these  will be described in detail in the fourth  section. In the  second section, we  discuss the relation between the inverse limits and the $r$-skeletons. The third section contains results about the $r$-skeletons on zero-dimensional spaces. In particular,  we show that  if  $\{r_s:s\in \Gamma\}$ is  an $r$-skeleton on a zero-dimensional compact space $X$ without isolated points, then  $\overline{r_s[X]}$ is not countable for some $s\in \Gamma$.

\section{$r$-skeletons and Inverse Limits}

The original notion of an $r$-skeleton used  inverse limits of second countable spaces and from here one can obtain the family of retraction given in Definition \ref{skeleton} (this was done in \cite{kubis1}). In what follows, we explain in detail this connection in a different and useful manner.

\medskip

We shall consider inverse system of compact spaces whose all bonding mappings are surjetions. The symbol $\langle X_s,f^t_s,\Gamma\rangle$ will  denote a such inverse system, where $\Gamma$ is a up-directed partially ordered set and $f^t_s$ is the bonding map, for $s\leq t$. We say that $\langle X_s,f^t_s,\Gamma\rangle$ is $\sigma$-{\it complete} if  $\Gamma$ is $\sigma$-complete and $X_t=\langle X_s,f^{s'}_s,\Gamma'\rangle$, where   $\Gamma'\subseteq \Gamma$ is a countable up-directed set and  $t=\sup \Gamma'$.\\

\medskip

Next, we shall consider    $\sigma$-complete inverse  systems  $\langle X_s,f^t_s,\Gamma\rangle$ of compact spaces with the following properties:
\begin{enumerate}
\item[$(a)$] $X_s$ is a cosmic space,
\item[$(b)$] there is  $X'_s\subseteq \underleftarrow{lim}\langle X_s,f^t_s,\Gamma\rangle$ such that $P_s:=\pi_s \upharpoonright_{X'_s}:X'_s\rightarrow X_s$ is an homeomorphism, where $\pi_s$ is the projection on $X_s$, and
\item[$(c)$] if  $t \in \Gamma$ and $t\geq s$, then  $X'_s\subseteq X'_t$,
\end{enumerate}
for every  $s\in \Gamma$.

\begin{remark}\label{obvi}
  Assume that $X=\underleftarrow{lim}\langle X_s,f^t_s,\Gamma\rangle$.   For $s\in \Gamma$, we put $r_s:X\rightarrow X$ by $r_s:= P^{-1}_s\circ  \pi_s$. Observe that $x=\pi_s\circ P^{-1}_s(x)$ for each $x\in X_s$. Then, by conditions $(a)-(c)$,  we obtain that $\{r_s:s \in \Gamma\}$ is a family of retractions.
\end{remark}

\begin{lemma}\label{lemita1} Let $\langle X_s,f^t_s,\Gamma\rangle$ be a $\sigma$-complete inverse system   which satisfies  $(a)-(c)$. Then the following assertions hold:
\begin{enumerate}
\item For $t>s$ there is a continuous link function  $g^t_s:X'_t\rightarrow X'_s$  such that $r_s=g^t_s\circ r_t$,

\item $\langle X'_s,g^t_s,\Gamma\rangle$ is a $\sigma$-complete inverse system such that $\underleftarrow{lim}\langle X'_s,f^t_s,\Gamma\rangle=\{(r_s(x))_{s\in \Gamma}:x\in \underleftarrow{lim}\langle X_s,f^t_s,\Gamma\rangle\}$, and

\item $\underleftarrow{lim}\langle X'_s,f^t_s,\Gamma\rangle$ is homeomorphic to $\underleftarrow{lim}\langle X_s,f^t_s,\Gamma\rangle$.
\end{enumerate}
\end{lemma}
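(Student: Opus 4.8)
The plan is to route all three parts through the single family of retractions $r_s=P^{-1}_s\circ\pi_s$ introduced in Remark \ref{obvi}, reducing everything to two bookkeeping identities. First I would record that for $x\in X$ and $s\le t$ the thread condition defining the inverse limit gives $\pi_s=f^t_s\circ\pi_t$, and that for $a\in X_t$ the point $P^{-1}_t(a)\in X'_t\subseteq X$ is itself a thread, so $\pi_s\circ P^{-1}_t=f^t_s$ on $X_t$. Combining these yields the central relation
\[
r_s\circ r_t=P^{-1}_s\circ(\pi_s\circ P^{-1}_t)\circ\pi_t=P^{-1}_s\circ f^t_s\circ\pi_t=P^{-1}_s\circ\pi_s=r_s,
\]
valid for $s\le t$. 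For part (1) I would then set $g^t_s:=r_s\!\upharpoonright_{X'_t}$. This is continuous, maps $X'_t$ into $r_s(X)=X'_s$, and, since $r_t(X)\subseteq X'_t$, satisfies $g^t_s\circ r_t=r_s\circ r_t=r_s$, which is exactly the required link identity.

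For part (2) the first task is to check that $\langle X'_s,g^t_s,\Gamma\rangle$ really is an inverse system of surjections: the cocycle identity $g^{t'}_s=g^t_s\circ g^{t'}_t$ follows from $r_s\circ r_t=r_s$ restricted to $X'_{t'}$, and surjectivity follows from condition $(c)$, since $X'_s\subseteq X'_t$ and $g^t_s$ restricts to the identity on $X'_s$. I would then prove the description of the limit in two steps. For the easy inclusion, given $x\in X$ the point $(r_s(x))_{s\in\Gamma}$ is a thread because $g^t_s(r_t(x))=r_s(x)$, so $\{(r_s(x))_s:x\in X\}$ is contained in the limit. For the reverse inclusion, take any thread $(y_s)_s$; the relation $g^t_s(y_t)=y_s$ unwinds, using $y_s\in X'_s$ and hence $\pi_s(y_s)=P_s(y_s)$, to $\pi_s(y_t)=\pi_s(y_s)$ for $s\le t$. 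The coordinates $\pi_s(y_s)$ are therefore pairwise compatible (the thread condition for them reduces to $f^t_s(\pi_t(y_t))=\pi_s(y_t)=\pi_s(y_s)$) and define a genuine point $x\in X$ with $\pi_s(x)=\pi_s(y_s)$, for which $r_s(x)=P^{-1}_s(\pi_s(y_s))=y_s$. This gives equality.

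For the $\sigma$-completeness of the primed system I would verify that the homeomorphisms $(P_s)_{s\in\Gamma}$ constitute an isomorphism of inverse systems onto $\langle X_s,f^t_s,\Gamma\rangle$, i.e. $f^t_s\circ P_t=P_s\circ g^t_s$; both sides evaluate to $\pi_s$ on $X'_t$ by the two identities above. Since $\Gamma$ is $\sigma$-complete by hypothesis, transporting the defining condition $X_t=\underleftarrow{lim}\langle X_s,f^{s'}_s,\Gamma'\rangle$ of the original system through $P^{-1}$ yields $X'_t=\underleftarrow{lim}\langle X'_s,g^{s'}_s,\Gamma'\rangle$, which is exactly what $\sigma$-completeness of $\langle X'_s,g^t_s,\Gamma\rangle$ asks. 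Finally, part (3) follows from the same isomorphism: the induced map $x\mapsto(r_s(x))_{s\in\Gamma}$ is a continuous bijection (surjectivity and injectivity both come from the description in (2) together with $\pi_s=P_s\circ r_s$), and a continuous bijection between the compact Hausdorff inverse limits is a homeomorphism.

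I expect the only genuine obstacle to be the reverse inclusion in part (2): one must resist treating the $y_s$ as abstract points and instead exploit that each $y_s$, lying in $X'_s\subseteq X$, is itself a thread of the original system, so that the link relation $g^t_s(y_t)=y_s$ forces the coordinatewise compatibilities $\pi_s(y_t)=\pi_s(y_s)$ needed to glue the $y_s$ into a single $x\in X$. Once that gluing is isolated, part (1), part (3), and $\sigma$-completeness reduce to routine verifications resting on $r_s\circ r_t=r_s$ and the system isomorphism $(P_s)_{s\in\Gamma}$.
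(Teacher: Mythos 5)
Your proposal is correct, and its overall skeleton matches the paper's: the same link maps (your $g^t_s=r_s\!\upharpoonright_{X'_t}$ coincides with the paper's $g^t_s=P^{-1}_s\circ f^t_s\circ P_t$, since for $a\in X'_t$ one has $P_t(a)=\pi_t(a)$ and hence $P^{-1}_s\circ f^t_s\circ P_t(a)=P^{-1}_s(\pi_s(a))=r_s(a)$), the same cocycle verification, the same transport of $\sigma$-completeness through the system isomorphism $(P_s)_{s\in\Gamma}$, and the same endgame for (3) via a continuous bijection between compact Hausdorff spaces. The genuine divergence is exactly where you predicted: the set equality in (2). The paper proves only the inclusion $h(X)\subseteq \underleftarrow{lim}\langle X'_s,g^t_s,\Gamma\rangle$ directly, and obtains the reverse inclusion indirectly, by showing $h(X)$ is dense (approximating an arbitrary thread $z$ on finitely many coordinates $t_1,\dots,t_n$ using an upper bound $t$ and the point $\pi_t(z)\in X'_t\subseteq X$) and then invoking compactness of the continuous image $h(X)$ to upgrade density to equality. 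You instead construct an exact preimage: given a thread $(y_s)_{s\in\Gamma}$ of the primed system, you exploit that each $y_s\in X'_s\subseteq X$ is itself a thread of the original system, unwind $g^t_s(y_t)=y_s$ to $\pi_s(y_t)=\pi_s(y_s)$, glue the coordinates $\pi_s(y_s)$ into a point $x\in X$, and check $r_s(x)=y_s$. Your argument is more elementary at this step -- it needs neither density nor compactness for surjectivity of $h$ (compactness enters only in (3), where both proofs need it anyway) -- and it isolates the observation that the $y_s$ are themselves threads, which the paper uses only implicitly through $P^{-1}_t$. Two further small points in your favour: you verify that the new bonding maps $g^t_s$ are surjective (via condition $(c)$ and the fact that $r_s$ fixes $X'_s$), which the paper's standing convention on inverse systems requires but whose proof it omits, and you make injectivity of $h$ explicit via $\pi_s=P_s\circ r_s$, where the paper simply asserts bijectivity.
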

\begin{proof}
We set $X=\underleftarrow{lim}\langle X_s,f^t_s,\Gamma\rangle$.\\
$(1)$ For $s\leq t$, we  define $g^t_s:=P^{-1}_s\circ f^t_s\circ P_t$. By   definition, $g^t_s$ is a continuos function and
$$
\begin{array}{ll}
g^t_s\circ r_t&=P^{-1}_s\circ f^t_s \circ P_t\circ r_t\\
&=P^{-1}_s\circ f^t_s \circ P_t\circ P^{-1}_t\circ \pi_t\\
&=P^{-1}_s\circ f^t_s \circ \pi_t\\
&=P^{-1}_s\circ \pi_s\\
&=r_s.
\end{array}
$$
Hence, $g^t_s$ is the desired function.\\
$(2)$ If $s\leq t\leq t'$, then  we have that
$$
\begin{array}{ll}
g^t_s\circ g^{t'}_t&=P^{-1}_s\circ f^t_s \circ P_t\circ P^{-1}_t\circ f^{t'}_t \circ P_{t'}\\
&=P^{-1}_s\circ f^t_s \circ f^{t'}_t \circ P_{t'}\\
&=P^{-1}_s\circ f^{t'}_s \circ P_{t'}\\
&=g^{t'}_s.
\end{array}
$$
 This shows that $\langle X'_s,g^t_s,\Gamma\rangle$ is an inverse system. Since  that $X'_s$ is homeomorphic to $X_s$ and the $\sigma$-completeness of $\langle X_s,f^t_s,\Gamma\rangle$, we deduce the $\sigma$-completeness of $\langle X'_s,g^t_s,\Gamma\rangle$.
It is not hard to see that the mapping
$h:X \to \underleftarrow{lim}\langle X'_s,g^t_s,\Gamma\rangle
$ defined by $h((x_s)_{s\in \Gamma})=(r_s(x))_{s\in \Gamma}$ is continuous. Hence, $h(X)=\{(r_s(x))_{s\in \Gamma}:x\in X\}\subseteq \underleftarrow{lim}\langle X'_s,g^t_s,\Gamma\rangle$.  We shall prove that $h(X)$ is dense in $\underleftarrow{lim}\langle X'_s,g^t_s,\Gamma\rangle$. Indeed, let $z\in \underleftarrow{lim}\langle X'_s,g^t_s,\Gamma\rangle$ and $U=\bigcap^n_{i=1}\pi^{-1}_{t_i}(U_i)$, where $U_i\subseteq X_{t_i}$ is an open set such that $\pi_{t_i}(z)\in U_i$, for $1\leq i\leq n$. Let us take $t \in \Gamma$ such that $t\geq t_i$ for all $1 \leq i \leq n$, and $y\in X_t$ so that $r_t(y)=\pi_t(z)$. We observe that
$$
\begin{array}{ll}
\pi_{t_i}(z)&=g^t_{t_i}(\pi_t(z))\\
&=g^t_{t_i}(r_t(y))\\
&=g^t_{t_i}\circ r_t(y)\\
&=r_{t_i}(y),
\end{array}
$$
for every $1\leq i\leq n$. Hence, we have that $(r_s(y))_{s\in \Gamma}\in U$.  Therefore, $h(X)$ is dense in  $\underleftarrow{lim}\langle X'_s,g^t_s,\Gamma\rangle$. By the compactness of $h(X)$, we have $h(X)=\underleftarrow{lim}\langle X'_s,g^t_s,\Gamma\rangle$.\\

$(3)$ Since $h$ is a biyective continuous function between compact spaces, we obtain that $X$ is homeomorphic to $\underleftarrow{lim}\langle X'_s,g^t_s,\Gamma\rangle$.
\end{proof}

\begin{lemma}\label{lemita}
 Let $\langle X_s,f^t_s,\Gamma\rangle$ be a $\sigma$-complete inverse system    which satisfies  $(a)-(c)$.   If  $\gamma>s$ and  $x\in \underleftarrow{lim}\langle X_s,f^t_s,\Gamma\rangle$, then $\pi_s(r_\gamma(x))=\pi_s(x)$.
\end{lemma}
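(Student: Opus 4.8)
The plan is to unwind the definitions and exploit the fact that a point produced by $r_\gamma$ is itself a thread of the inverse limit, so that its coordinates are already linked by the bonding maps. First I would fix $\gamma > s$ and $x \in X := \underleftarrow{lim}\langle X_s, f^t_s, \Gamma\rangle$, and set $y := r_\gamma(x) = P_\gamma^{-1}(\pi_\gamma(x))$. Since $P_\gamma^{-1}$ takes its values in $X'_\gamma \subseteq X$ by condition $(b)$, the point $y$ belongs to the inverse limit $X$; in particular its coordinates obey the compatibility relation $\pi_s(y) = f^\gamma_s(\pi_\gamma(y))$ coming from the definition of a thread.

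Next I would compute $\pi_\gamma(y)$. Recalling from Remark \ref{obvi} the identity $\pi_s \circ P_s^{-1} = \mathrm{id}_{X_s}$, which holds for every index because $P_s = \pi_s\upharpoonright_{X'_s}$ is a homeomorphism, one gets $\pi_\gamma(y) = \pi_\gamma(P_\gamma^{-1}(\pi_\gamma(x))) = \pi_\gamma(x)$. Substituting this into the compatibility relation above yields $\pi_s(y) = f^\gamma_s(\pi_\gamma(x))$.

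Finally, since $x$ is itself a thread of $X$, the same compatibility relation applied to $x$ gives $\pi_s(x) = f^\gamma_s(\pi_\gamma(x))$. Comparing the two expressions, I would conclude $\pi_s(r_\gamma(x)) = \pi_s(y) = f^\gamma_s(\pi_\gamma(x)) = \pi_s(x)$, which is exactly the asserted equality.

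There is no serious obstacle here: the statement is a bookkeeping consequence of the inverse-limit structure and the retraction formula $r_\gamma = P_\gamma^{-1}\circ \pi_\gamma$. The only point that genuinely requires attention is the observation that $r_\gamma(x)$, although built by applying the homeomorphism $P_\gamma^{-1}$, lands back inside the inverse limit $X$, so that its $s$- and $\gamma$-coordinates are tied together by $f^\gamma_s$; once this is noted, everything else reduces to the identity $\pi_s \circ P_s^{-1} = \mathrm{id}$.
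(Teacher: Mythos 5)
Your proof is correct and follows essentially the same route as the paper's: both unwind $r_\gamma = P_\gamma^{-1}\circ\pi_\gamma$, apply the thread compatibility $\pi_s = f^\gamma_s\circ\pi_\gamma$ (to $r_\gamma(x)$ and to $x$), and cancel via $\pi_\gamma\circ P_\gamma^{-1}=\mathrm{id}_{X_\gamma}$. Your write-up merely makes explicit the observation that $r_\gamma(x)\in X'_\gamma\subseteq X$, which the paper's chain of equalities uses implicitly.
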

\begin{proof}
The equality  easily  follows from the relations
$$
\begin{array}{ll}
\pi_{s}(r_\gamma(x))&=\pi_{s}(P^{-1}_\gamma\circ \pi_\gamma(x))\\
&=f^\gamma_{s}\circ\pi_\gamma\circ P^{-1}_\gamma\circ\pi_\gamma(x)\\
&=f^\gamma_{s}\circ\pi_\gamma(x)\\
&=\pi_{s}(x).
\end{array}
$$
\end{proof}

In the next theorem, we describe the relation between the compact spaces with $r$-skeleton and the inverse systems.

\begin{theorem}
Let  $X$  be a compact space. Then the following conditions are equivalent:
\begin{enumerate}
\item  $X=\underleftarrow{lim}\langle X_s,f^t_s,\Gamma\rangle$, where  $\langle X_s,f^t_s,\Gamma\rangle$ is a $\sigma$-complete inverse system   which satisfies  $(a)-(c)$.
\item $X$ admits an $r$-skeleton.
\end{enumerate}

\end{theorem}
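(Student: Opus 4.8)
The plan is to prove the two implications separately, transporting the data through the correspondence $r_s=P_s^{-1}\circ\pi_s$ for $(1)\Rightarrow(2)$ and through $X_s=r_s[X]$ for $(2)\Rightarrow(1)$, and to lean on compactness together with the convergence axioms $(iii)$--$(iv)$ of Definition \ref{skeleton} at the two delicate points. Throughout, recall that convergence in an inverse limit $X\subseteq\prod_s X_s$ is tested coordinatewise.

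For $(1)\Rightarrow(2)$, Remark \ref{obvi} already supplies a family $\{r_s:s\in\Gamma\}$ of retractions, so only the four clauses of Definition \ref{skeleton} must be checked. Clause $(i)$ is immediate, since $r_s[X]=P_s^{-1}[\pi_s[X]]=X'_s$ is homeomorphic to the cosmic space $X_s$ by $(b)$. For $(ii)$ fix $s\le t$: because $X'_s\subseteq X'_t$ by $(c)$ and $P_t=\pi_t\!\upharpoonright_{X'_t}$ is a homeomorphism, every point of $X'_s$ is fixed by $r_t$, whence $r_t\circ r_s=r_s$; and $r_s\circ r_t=r_s$ follows from Lemma \ref{lemita}, which gives $\pi_s(r_t(x))=\pi_s(x)$ and hence $P_s^{-1}(\pi_s(r_t(x)))=r_s(x)$. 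Clause $(iv)$ is Lemma \ref{lemita} again: since $\pi_u(r_s(x))=\pi_u(x)$ for all $s\ge u$, each coordinate of the net $(r_s(x))_s$ is eventually equal to $\pi_u(x)$, so $r_s(x)\to x$. Clause $(iii)$ is where $\sigma$-completeness enters: for $s=\sup_n s_n$ with $s_n$ increasing, all the points $r_{s_n}(x)$ and $r_s(x)$ lie in $X'_s$, on which $P_s$ is a homeomorphism, so it suffices to prove $\pi_s(r_{s_n}(x))\to\pi_s(x)=\pi_s(r_s(x))$ in $X_s$; since $X_s=\underleftarrow{lim}\langle X_{s_m},f^{s_k}_{s_m},\{s_m\}\rangle$, this is tested against the maps $f^s_{s_m}$, and from $f^s_{s_m}\circ\pi_s=\pi_{s_m}$ together with $\pi_{s_m}(r_{s_n}(x))=\pi_{s_m}(x)$ (for $n\ge m$, by Lemma \ref{lemita}) each coordinate is eventually constant.

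For $(2)\Rightarrow(1)$, set $X_s:=r_s[X]$, cosmic by $(i)$, and for $s\le t$ let $f^t_s:=r_s\!\upharpoonright_{X_t}$; using $r_s\circ r_t=r_s$ one checks these are surjections satisfying $f^t_s\circ f^{t'}_t=f^{t'}_s$, so $\langle X_s,f^t_s,\Gamma\rangle$ is an inverse system. Define $\phi\colon X\to\prod_s X_s$ by $\phi(x)=(r_s(x))_{s\in\Gamma}$; the relation $f^t_s(r_t(x))=r_s(x)$ shows $\phi$ maps continuously into $\underleftarrow{lim}\langle X_s,f^t_s,\Gamma\rangle$. Injectivity of $\phi$ is clause $(iv)$. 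For surjectivity, take $(x_s)_s$ in the limit, so $r_s(x_t)=x_s$ whenever $s\le t$; letting $x$ be a cluster point of the net $(x_s)_s$ (which exists by compactness), the continuity of each $r_{s_0}$ along a subnet converging to $x$ and eventually lying above $s_0$ yields $r_{s_0}(x)=x_{s_0}$, so $\phi(x)=(x_s)_s$. Being a continuous bijection between compact Hausdorff spaces, $\phi$ is a homeomorphism, and we identify $X$ with the limit.

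It remains to verify $(a)$--$(c)$ and $\sigma$-completeness. Here $(a)$ is $(i)$. Put $X'_s:=\phi[r_s[X]]$; since $\pi_s\circ\phi=r_s$ and $r_s$ is idempotent, $\pi_s\circ\phi$ restricts to the identity on $r_s[X]$, so $P_s=\pi_s\!\upharpoonright_{X'_s}$ is the inverse of $\phi\!\upharpoonright_{r_s[X]}$, a homeomorphism onto $X_s$, giving $(b)$; and for $s\le t$ the identity $r_s\circ r_t=r_s$ gives $r_s[X]\subseteq r_t[X]$, hence $X'_s\subseteq X'_t$, which is $(c)$. Finally $\Gamma$ is $\sigma$-complete by hypothesis; given $t=\sup\Gamma'$ with $\Gamma'$ countable up-directed, choose a cofinal increasing sequence $s_n\nearrow t$ and consider $\psi\colon X_t\to\underleftarrow{lim}\langle X_{s_n},f^{s_m}_{s_n},\{s_n\}\rangle$ with $\psi(y)=(r_{s_n}(y))_n$: injectivity of $\psi$ is clause $(iii)$ (as $r_t(y)=\lim_n r_{s_n}(y)$ and $r_t$ fixes $X_t$), and for surjectivity, given a compatible $(y_n)_n$, a cluster point $y$ of $(y_n)_n$ satisfies $r_{s_n}(y)=y_n$, so $r_t(y)\in X_t$ maps to $(y_n)_n$ because $r_{s_n}(r_t(y))=r_{s_n}(y)=y_n$. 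I expect this verification of $\sigma$-completeness, together with the dual surjectivity step for $\phi$, to be the main obstacle, since they are the only places where one must manufacture a point of a compact space from coherent approximations and then pin down its retractions via $(iii)$--$(iv)$.
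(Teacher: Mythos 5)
Your proof is correct, and for $(1)\Rightarrow(2)$ it is essentially the paper's argument: the same retractions $r_s=P_s^{-1}\circ\pi_s$ from Remark \ref{obvi}, clause $(ii)$ via $X'_s\subseteq X'_t$ plus Lemma \ref{lemita}, clause $(iv)$ via Lemma \ref{lemita} coordinatewise, and clause $(iii)$ via $\sigma$-completeness --- your only variation there is that you test convergence in $X_s$ through the homeomorphism $P_s$ against the maps $f^s_{s_m}$, whereas the paper works inside $X'_s=\underleftarrow{lim}\langle X'_{s_n},g^{s_m}_{s_n},\langle s_n\rangle\rangle$ and uses the already-established clause $(ii)$; the two are interchangeable. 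The genuine difference is in $(2)\Rightarrow(1)$: the paper disposes of it in two lines by citing Kubi\'s--Michalewski for $X=\underleftarrow{lim}\langle r_s(X),r_s{\upharpoonright}_{X_t},\Gamma\rangle$ and declaring $(a)$--$(c)$ ``easy to verify,'' while you reconstruct that result from scratch --- the same system $X_s=r_s[X]$, $f^t_s=r_s{\upharpoonright}_{X_t}$, but with full verifications: surjectivity of $\phi$ via a cluster point of the thread $(x_s)_s$ pinned down by continuity of each $r_{s_0}$, and $\sigma$-completeness of the system via the map $\psi(y)=(r_{s_n}(y))_n$ on a cofinal increasing sequence in $\Gamma'$, whose injectivity is exactly clause $(iii)$ and whose surjectivity is another cluster-point argument. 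These compactness arguments are sound (note $\sup_n s_n=\sup\Gamma'=t$ since the $s_n$ are cofinal in $\Gamma'$, so clause $(iii)$ indeed applies), so your version buys a self-contained proof where the paper relies on \cite{kubis1}. One trivial slip: to get $r_s[X]\subseteq r_t[X]$ for condition $(c)$ you should invoke $r_t\circ r_s=r_s$ (so $r_s(x)=r_t(r_s(x))\in r_t[X]$), not $r_s\circ r_t=r_s$ as written; both identities are part of axiom $(ii)$, so nothing breaks.
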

\begin{proof}
$(1)\Rightarrow (2)$. According to clause $(3)$ of Lemma \ref{lemita1}, we can identified  $X$ with $\underleftarrow{lim}\langle X'_s,f^t_s,\Gamma\rangle$.  We shall prove that the family of retractions $\{r_s:s\in \Gamma\}$ from the Remark \ref{obvi} satisfy the conditions  $(i)-(iv)$ of Definition \ref{skeleton}. Condition $(i)$ follows from  $(a)$. Now, let  $s,t\in \Gamma$ with $s\leq t$. On one hand, we have
$$
\begin{array}{ll}
r_s \circ r_t &=P^{-1}_s\circ \pi_s \circ P^{-1}_t\circ \pi_t\\
&=P^{-1}_s\circ f^t_s\circ\pi_t \circ P^{-1}_t\circ \pi_t\\
&=P^{-1}_s\circ f^t_s\circ \pi_t\\
&=P^{-1}_s\circ\pi_s=r_s.
\end{array}
$$
 On the other hand, since $X'_s\subseteq X'_t$, we have  $r_s(x)\in X'_t$, for all $x\in X$.  We note that $r_t$ fixes the elements of  $X'_t$, hence  $r_t\circ r_s(x)=r_s(x)$ for all $x\in X$. So, $r_s=r_s\circ r_t=r_t\circ r_s$ and then condition $(ii)$ holds.
 For the condition $(iii)$, let $\langle s_n\rangle_{n< \omega}$ be an increasing sequence   in $\Gamma$ and $s=\sup_{n<\omega} s_n$. We shall prove that
 $r_s(x)=\lim_{n\rightarrow \infty} r_{s_n}(x)$, inside of  $X'_s$, for every $x\in X$. Fix $x\in X$. By the $\sigma$-completeness  of the inverse system,
 $X'_s=\underleftarrow{lim}\langle X'_{s_n},g^{s_m}_{s_n},\langle s_n\rangle_{n< \omega}\rangle$.
 We pick a basic open set  $U=\bigcap^k_{i=1}\pi^{-1}_{s_{n_i}}(U_i)$ in $X'_s$  which contains  $r_s(x)$, where $U_i$ is open in $X'_{s_{n_i}}$ for each $1\leq i \leq k$.  Using  condition $(ii)$  and the fact that   $s \in \Gamma$ is an upper bound of $\{s_{n_1},...,s_{n_k}\}$, we have $ \pi_{s_{n_i}}(r_s(x))=r_{s_{n_i}}(r_s(x))=r_{s_{n_i}}(x)$, for all $1\leq i \leq k$. Since $r_s(x)\in U$,  $r_{s_{n_i}}(x)\in U_i$, for all $i$.  Now, we pick $n'<\omega$ such that $s_{n'}$ is an upper bound of $\{s_{n_1},...,s_{n_k}\}$. If $n\geq n'$, then we have that $\pi_{s_{n_i}}(r_{s_n}(x))=r_{s_{n_i}}(r_{s_n}(x))=r_{s_{n_i}}(x)\in U_i$ for all $1 \leq i \leq k$. Thus, we obtain  that $r_{s_n}(x)\in U$ for each $n \geq n'$. Therefore, $r_s(x)=\lim_{n\rightarrow \infty} r_{s_n}(x)$, for all $x\in X$.
 Finally, we shall prove  $(iv)$; that is, $x=\lim_{s\in \Gamma}r_s(x)$, for all $x\in X$. Fix $x \in X$ and let $U=\bigcap^k_{i=1}\pi^{-1}_{s_i}(U_i)$ a basic open set with $x\in U$, where $U_i$ is open in $X_{s_i}$ for every $ 1 \leq i \leq k$.   Observe that  $x_{s_i}=\pi_{s_i}(x)\in U_i$  for every $ 1 \leq i \leq k$. Pick an upper bound $s' \in \Gamma$ of $\{s_1,...,s_k\}$. If $s>s'$, by  Lemma \ref{lemita}, we then have that $\pi_{s_i}(r_s(x))=\pi_{s_i}(x)\in U_i$. Therefore, $r_s(x)\in U$ for every $s\geq s'$.
Thus, we have shown  that $\{r_s:s\in \Gamma\}$ is an $r$-skeleton on $X$.\\

$(2)\Rightarrow (1)$ Let $\{r_s:s \in \Gamma\}$ be an $r$-skeleton on $X$. We know from the paper \cite{kubis1} that $X=\underleftarrow{lim}\langle X_s,R^t_s,\Gamma\rangle$, where $\langle X_s,R^t_s,\Gamma\rangle$ is a $\sigma$-complete inverse system, $X_s=r_s(X)$  and $R^t_s:=r_s\upharpoonright_{X_t}$. The conditions $(a)-(c)$ are easy to verify.
\end{proof}

\section{$r$-skeletons on zero-dimensional spaces}

It is very natural to ask when an $r$-skeleton $\{r_s:s\in \Gamma\}$ satisfies that $|r_s[X]| \leq \omega$ for each $s\in \Gamma$. In the next results, we give a necessary condition to have this property. In this section, we will use usual terminology of trees.    By $\{0,1\}$ we denote the discrete space with two elements. We will denote the Cantor space by $\mathcal{C}$, $\{0,1\}^{<\omega}$  is the set of all finite sequences of $\{0,1\}$; $\sigma\hat{\hspace{0.1cm}}\lambda$  will be denote the usual  concatenation   for $\sigma, \lambda\in \{0,1\}^{<\omega}$,  and  the initial segment of length $n$ will denoted by  $p\mid n$ for $p\in \mathcal{C}$. For terminology not mentioned and used in the next results, we can consult \cite{kech}.

\begin{lemma}\label{lema2}

Let $X$ be a zero-dimensional compact space without isolated points. If $\{r_s:s\in \Gamma\}$ is an $r$-skeleton on $X$, then there is a Cantor scheme $\{U_\sigma:\sigma\in \{0,1\}^{<\omega}\}$ and $\{x_\sigma:\sigma\in \{0,1\}^{<\omega}\}\subset\bigcup_{s\in\Gamma}r_s(X)$ such that:
\begin{enumerate}
\item $U_\sigma$ is a clopen set,
\item $x_\sigma\in U_\sigma$ and for $i\in \{0,1\}$, $x_{\sigma\hat{\hspace{0.1cm}}i}\notin U_{\sigma\hat{\hspace{0.1cm}}i}$.
\end{enumerate}
Moreover, for $p\in \mathcal{C}$, $\bigcap_{n<\omega}U_{p\mid n}\neq\emptyset$.
\end{lemma}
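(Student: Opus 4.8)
The plan is to build the scheme by recursion on the length of $\sigma$, using only three features of $X$: it is compact, Hausdorff and zero-dimensional (so it has a base of clopen sets and any two distinct points are separated by a clopen set), it has no isolated points (so every nonempty open set is infinite), and the induced subspace $Y:=\bigcup_{s\in\Gamma}r_s(X)$ is dense. The density of $Y$ is the only place the $r$-skeleton is used: by clause $(iv)$ of Definition \ref{skeleton}, $x=\lim_{s\in\Gamma}r_s(x)$ for every $x$, so $x\in\overline{Y}$, whence $\overline{Y}=X$ and $Y$ meets every nonempty open set. I read condition $(2)$ as ``$x_\sigma\in U_\sigma$ and $x_\sigma\notin U_{\sigma\hat{\hspace{0.1cm}}i}$ for $i\in\{0,1\}$'', since the literal statement would force $x_{\sigma\hat{\hspace{0.1cm}}i}\in U_{\sigma\hat{\hspace{0.1cm}}i}$ and $x_{\sigma\hat{\hspace{0.1cm}}i}\notin U_{\sigma\hat{\hspace{0.1cm}}i}$ at once.

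For the recursion I maintain the invariants that each $U_\sigma$ is a nonempty clopen set, that $U_{\sigma\hat{\hspace{0.1cm}}0}$ and $U_{\sigma\hat{\hspace{0.1cm}}1}$ are disjoint clopen subsets of $U_\sigma$, and that $x_\sigma\in U_\sigma\cap Y$. First I set $U_\emptyset=X$ and pick $x_\emptyset\in Y$. Given $U_\sigma$ with $x_\sigma\in U_\sigma\cap Y$ already chosen, I use the absence of isolated points to find a point $y\in U_\sigma$ with $y\neq x_\sigma$, and then zero-dimensionality to find a clopen neighbourhood $W$ of $x_\sigma$ with $W\subseteq U_\sigma$ and $y\notin W$; thus $U_\sigma\setminus W$ is nonempty clopen and, having no isolated points, is infinite. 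Choosing two distinct points in $U_\sigma\setminus W$ and separating them by disjoint clopen sets yields nonempty disjoint clopen sets $U_{\sigma\hat{\hspace{0.1cm}}0},U_{\sigma\hat{\hspace{0.1cm}}1}\subseteq U_\sigma\setminus W\subseteq U_\sigma$, each avoiding $x_\sigma$, so $x_\sigma\notin U_{\sigma\hat{\hspace{0.1cm}}i}$. Finally, since each $U_{\sigma\hat{\hspace{0.1cm}}i}$ is nonempty open and $Y$ is dense, I choose $x_{\sigma\hat{\hspace{0.1cm}}i}\in U_{\sigma\hat{\hspace{0.1cm}}i}\cap Y$, which restores the invariants at the next level.

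This produces clopen sets and points verifying $(1)$ and $(2)$, with siblings disjoint and nested, so $\{U_\sigma:\sigma\in\{0,1\}^{<\omega}\}$ is a Cantor scheme and $\{x_\sigma:\sigma\in\{0,1\}^{<\omega}\}\subseteq Y=\bigcup_{s\in\Gamma}r_s(X)$. For the ``Moreover'' clause, fix $p\in\mathcal{C}$; the sets $U_{p\mid n}$ are nonempty and decreasing, since $U_{p\mid (n+1)}\subseteq U_{p\mid n}$ by construction, hence form a decreasing family of nonempty closed subsets of the compact space $X$. By the finite intersection property, $\bigcap_{n<\omega}U_{p\mid n}\neq\emptyset$.

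I do not expect a serious obstacle, as this is a routine recursive Cantor scheme and compactness handles the branch intersections. The only steps needing care are establishing that $Y$ is dense, so that every $x_\sigma$ can be taken inside the induced subspace (the sole use of the $r$-skeleton axioms), and applying the absence of isolated points at each stage, both to make $U_\sigma\setminus W$ nonempty (so that $x_\sigma$ can be excluded from its two children) and to guarantee that this set is large enough to split into two disjoint clopen pieces.
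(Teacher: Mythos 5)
Your proposal is correct and takes essentially the same route as the paper's proof: a recursion on the length of $\sigma$ in which zero-dimensionality and the absence of isolated points produce two disjoint clopen children of $U_\sigma$ avoiding $x_\sigma$, density of $Y=\bigcup_{s\in\Gamma}r_s(X)$ supplies the points $x_{\sigma\hat{\hspace{0.1cm}}i}$, and compactness plus the finite intersection property gives $\bigcap_{n<\omega}U_{p\mid n}\neq\emptyset$. Your corrected reading of condition $(2)$ as $x_\sigma\notin U_{\sigma\hat{\hspace{0.1cm}}i}$ is exactly what the paper's construction delivers (and what the proof of Theorem \ref{isol} actually uses), and your explicit justification that $Y$ is dense via clause $(iv)$ fills in a step the paper leaves implicit.
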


\begin{proof}
For induction on length of $\sigma$. We define $U_\emptyset:=X$ and choose a point $x_\emptyset\in Y$. Now, let's suppose that  $U_\sigma$ and $x_\sigma$ are defined for all $\sigma\in \{0,1\}^{<\omega}$ of length $n$. Take $\sigma\in \{0,1\}^{<\omega}$ and suppose that length of $\sigma$ is $n$. Since $x_\sigma$ is not isolated, then there are $z, z'\in U_\sigma$ not equals to $x_\sigma$. Pick two disjointed clopen subsets  $U$ and $V$ which $z\in U$, $z'\in V$, $x_\sigma\notin U$ and $x_\sigma\notin V$. Let be $U_{\sigma\hat{\hspace{0.1cm}}0}:= U_\sigma\cap U$ and   $U_{\sigma\hat{\hspace{0.1cm}}1}:= U_\sigma\cap V$. Using the density of $Y$, we can choose $x_{\sigma\hat{\hspace{0.1cm}}0}\in U_{\sigma\hat{\hspace{0.1cm}}0}\cap Y$ and $x_{\sigma\hat{\hspace{0.1cm}}1}\in U_{\sigma\hat{\hspace{0.1cm}}1}\cap Y$. Now, pick $p\in \mathcal{C}$.  For construction,  $\{U_{p\mid n}:n< \omega\}$ has the finite intersection property. By compacity, it follows that $\bigcap_{n<\omega}U_{p\mid n}\neq\emptyset$.
\end{proof}

\begin{theorem}\label{isol}
Let $X$ be a zero-dimensional compact space without isolated points. If $\{r_s:s\in \Gamma\}$ is an $r$-skeleton on $X$, then there is $s\in \Gamma$  such that $\overline{r_s[X]}$ is not countable.
\end{theorem}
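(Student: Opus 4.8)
The plan is to extract an uncountable subset sitting inside a single retract image $r_s(X)$, by combining Lemma \ref{lema2} with the $\sigma$-completeness of $\Gamma$ and a compactness (cluster point) argument. First I would invoke Lemma \ref{lema2} to obtain a Cantor scheme $\{U_\sigma:\sigma\in\{0,1\}^{<\omega}\}$ of clopen sets together with points $\{x_\sigma:\sigma\in\{0,1\}^{<\omega}\}\subseteq\bigcup_{t\in\Gamma}r_t(X)$ such that $x_\sigma\in U_\sigma$, the sets $U_{\sigma\hat{\hspace{0.1cm}}0}$ and $U_{\sigma\hat{\hspace{0.1cm}}1}$ are disjoint clopen subsets of $U_\sigma$, and $\bigcap_n U_{p\mid n}\neq\emptyset$ for every branch $p\in\mathcal{C}$. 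Since there are only countably many $\sigma$, each $x_\sigma$ lies in some $r_{t_\sigma}(X)$, yielding a countable family $\{t_\sigma\}\subseteq\Gamma$.

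The key structural step is to absorb all of these indices into one. Because $\Gamma$ is up-directed and $\sigma$-complete, every countable subset of $\Gamma$ has an upper bound: enumerating $\{t_\sigma\}$ as $\{t_n\}$, one builds an increasing sequence with $u_0=t_0$ and $u_{n+1}\geq u_n,t_{n+1}$ by up-directedness, and sets $s=\sup_n u_n\in\Gamma$. For this $s$ one has $t_\sigma\leq s$ for all $\sigma$, so clause $(ii)$ of Definition \ref{skeleton} gives $r_s\circ r_{t_\sigma}=r_{t_\sigma}$; since $r_{t_\sigma}$ fixes $x_\sigma\in r_{t_\sigma}(X)$, it follows that $r_s(x_\sigma)=x_\sigma$, i.e. $x_\sigma\in r_s(X)$ for every $\sigma$. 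Hence the whole countable set $A:=\{x_\sigma:\sigma\in\{0,1\}^{<\omega}\}$ lies in $r_s(X)$. As $r_s$ is a continuous retraction of the compact Hausdorff space $X$, the image $r_s(X)$ is compact, hence closed, so $\overline{r_s(X)}=r_s(X)\supseteq\overline{A}$, and it suffices to show that $\overline{A}$ is uncountable.

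For the cardinality I would run a cluster point argument along the branches of the scheme. Fix $p\in\mathcal{C}$; by nestedness $x_{p\mid m}\in U_{p\mid n}$ for all $m\geq n$, so a tail of the sequence $\langle x_{p\mid m}\rangle_m$ lies in each clopen set $U_{p\mid n}$. By compactness of $X$ this sequence has a cluster point $y_p$, and since every $U_{p\mid n}$ is closed and contains a tail, $y_p\in\bigcap_n U_{p\mid n}$; being a cluster point of a sequence in $A$, also $y_p\in\overline{A}$. For distinct branches $p\neq q$ first differing at coordinate $n$, the sets $\bigcap_m U_{p\mid m}$ and $\bigcap_m U_{q\mid m}$ are contained in the disjoint clopen sets $U_{(p\mid n)\hat{\hspace{0.1cm}}p(n)}$ and $U_{(p\mid n)\hat{\hspace{0.1cm}}q(n)}$, whence $y_p\neq y_q$. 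This exhibits $2^{\omega}$ distinct points of $\overline{A}\subseteq r_s(X)$, so $\overline{r_s[X]}=\overline{r_s(X)}$ is uncountable.

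I expect the main obstacle to be the bookkeeping that forces everything into a single image: Lemma \ref{lema2} only places the points $x_\sigma$ in the induced subspace $\bigcup_t r_t(X)$, and the whole argument collapses unless they can be pushed into one $r_s(X)$. This is precisely what the countable-upper-bound consequence of $\sigma$-completeness, together with the commutation and idempotence identities of clause $(ii)$, is designed to supply. The subsequent cluster point argument is then routine, the only point of care being that cluster points of the tail of $\langle x_{p\mid m}\rangle_m$ remain trapped inside the closed sets $U_{p\mid n}$, so that distinct branches land in disjoint pieces of the scheme.
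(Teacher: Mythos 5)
Your proof is correct, and its combinatorial core coincides with the paper's: both start from the Cantor scheme and the points $\{x_\sigma\}$ of Lemma \ref{lema2}, attach to each branch $p\in\mathcal{C}$ a cluster point of $\langle x_{p\mid n}\rangle_{n<\omega}$ trapped in $\bigcap_{n<\omega}U_{p\mid n}$, and use disjointness of these branch intersections to get $2^\omega$ distinct points. Where you genuinely diverge is the endgame, and your version is the tighter one. The paper sets $F=\{x_\sigma:\sigma\in\{0,1\}^{<\omega}\}$, invokes countable closedness of the induced space $Y$ (Kubi\'s' theorem) to place $\overline{F}$ and each branch point $x_p$ inside $Y$, and concludes that $\overline{F}$ is uncountable --- but as written it never exhibits the index $s$ demanded by the statement; the absorption of the countably many indices into a single retraction is left implicit. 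You supply exactly that step up front: choosing $t_\sigma$ with $x_\sigma\in r_{t_\sigma}(X)$, building an increasing sequence dominating all $t_\sigma$ by up-directedness, taking $s=\sup_n u_n$ by $\sigma$-completeness, and using clause $(ii)$ together with idempotence of $r_{t_\sigma}$ to get $r_s(x_\sigma)=x_\sigma$; then $\overline{A}\subseteq r_s(X)=\overline{r_s[X]}$ by compactness of the retract, with no appeal to the countably-closed property of $Y$ at all. Your trapping argument (a cluster point of a sequence with a tail in the closed set $U_{p\mid n}$ lies in $U_{p\mid n}$) also replaces, more cleanly, the paper's Hausdorff-separation argument by contradiction showing $x_p\in V_p$. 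One cosmetic caveat: condition $(2)$ of Lemma \ref{lema2} as printed reads $x_{\sigma\hat{\hspace{0.1cm}}i}\notin U_{\sigma\hat{\hspace{0.1cm}}i}$, a typo for $x_\sigma\notin U_{\sigma\hat{\hspace{0.1cm}}i}$ (see the lemma's proof); your reading --- nested clopen sets with disjoint children, which is what ``Cantor scheme'' means in \cite{kech} --- is the intended one, and since your argument uses only nestedness and disjointness (not the excluded-point condition, which the paper needs to keep its $x_p$ outside $\{x_{p\mid n}:n<\omega\}$), it is unaffected.
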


\begin{proof}
Let us consider $\{U_\sigma:\sigma\in \{0,1\}^{<\omega}\}$ and $\{x_\sigma:\sigma \in \{0,1\}^{<\omega}\}\subset\bigcup_{s\in\Gamma}r_s(X)$ like the Lemma \ref{lema2}.
We define  $F=\{x_\sigma:\sigma\in \{0,1\}^{<\omega}\}$. Since $F\in [Y]^{\leq \omega}$ and $Y$ is countable closed, then $\overline{F}\subseteq Y$. We shall prove that $\overline{F}$ is not countable. For $p\in \mathcal{C}$, let be $V_p=\bigcap_{n<\omega}U_{p\mid n}$ and $x_p\in \overline{\{x_{p\mid n}:n<\omega\}}$.
We observe that $x_p\notin\{x_{p\mid n}:n<\omega\}$. Since $\{x_{p\mid n}:n<\omega\}\in [Y]^{\leq \omega}$ and $Y$ is countable closed, we have $x_p\in Y$.
Also, $x_p\in V_p$. Let's suppose $x_p\notin V_p$. So, there is $n_0<\omega$ such that $x_p\notin U_{p\mid n_0}$.
By construction, $\{x_{p\mid m}:m\geq n_0\}\subset U_{p\mid n_0}$. Since that $U_{p\mid n_0}$ is clopen, then $X\setminus U_{p\mid n_0}$ is clopen.
 We note that $\{x_{p\mid m}:m<n_0\}\subset X\setminus U_{p\mid n_0}$.
Since $X$ is Hausdorff, we can find an open set $U$ such that $x_p\in U\subset X\setminus U_{p\mid n_0}$ and $U\cap \{x_{p\mid m}:m<n_0\}=\emptyset$. The last sentence is a contradiction because $x_p \in \overline{\{x_{p\mid n}:n<\omega\}}$. Finally, if $p,q\in \mathcal{C}$ with $p\neq q$, using that $V_p\cap V_q=\emptyset$ we have $x_p\neq x_q$. Since that $\{x_p:p\in \mathcal{C}\}\subseteq \overline{F}$, we conclude that $\overline{F}$ is not countable.
\end{proof}

\begin{corollary}\label{corisol}
 Let $X$ be a compact space. If $\{r_s:s\in \Gamma\}$ is an $r$-skeleton on $X$ such that $|r_s[X]| \leq \omega$ for all $s\in \Gamma$, then $X$ has a dense subset consisting of isolated points.
\end{corollary}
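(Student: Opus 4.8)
The plan is to argue by contraposition: assuming the isolated points of $X$ are not dense, I will produce some $s_0\in\Gamma$ with $r_{s_0}[X]$ uncountable, contradicting the hypothesis. First I would record two consequences of the hypothesis $|r_s[X]|\le\omega$. Each $r_s[X]$ is a retract of the compact space $X$, hence compact, and it is cosmic by Definition \ref{skeleton}$(i)$, hence metrizable; being also countable it is a countable compact metrizable space, and such a space is closed in $X$ (so $\overline{r_s[X]}=r_s[X]$ is countable) and zero-dimensional. Since, by the theorem of Section 2, $X$ is homeomorphic to $\underleftarrow{lim}\langle r_s[X],R^t_s,\Gamma\rangle$ and an inverse limit of zero-dimensional compact spaces is zero-dimensional (it embeds as a closed subspace of the product $\prod_{s\in\Gamma}r_s[X]$, which has a base of clopen sets), I conclude that $X$ itself is zero-dimensional.

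Now suppose the set $I$ of isolated points of $X$ is not dense, so that $W:=X\setminus\overline{I}$ is a nonempty open set; using zero-dimensionality I pick a nonempty clopen $C\subseteq W$. As $C$ is open and disjoint from $I$, a point isolated in $C$ would be isolated in $X$, so $C$ is a nonempty zero-dimensional compact space without isolated points. Since $\beta Y=X$, the induced subspace $Y=\bigcup_{s\in\Gamma}r_s[X]$ is dense in $X$, hence $Y\cap C$ is dense in the clopen set $C$; by Theorem \ref{teocuth1}, $C$ admits an $r$-skeleton, which sets the stage for applying Theorem \ref{isol} to $C$. Rather than quote Theorem \ref{isol} abstractly, I would run its construction directly inside $C$: imitating Lemma \ref{lema2}, I build a Cantor scheme $\{U_\sigma:\sigma\in\{0,1\}^{<\omega}\}$ of clopen subsets of $C$ together with points $x_\sigma\in U_\sigma\cap Y$ whose splitting is witnessed by $x_{\sigma\hat{\hspace{0.1cm}}i}\notin U_{\sigma\hat{\hspace{0.1cm}}i}$, which is possible because $C$ has no isolated points and $Y\cap C$ is dense in $C$.

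Writing $F=\{x_\sigma:\sigma\in\{0,1\}^{<\omega}\}\in[Y]^{\le\omega}$, I follow the proof of Theorem \ref{isol}: for each branch $p\in\mathcal{C}$ choose $x_p$ an accumulation point of $\{x_{p\mid n}:n<\omega\}$, note $x_p\in\overline{F}\subseteq Y$ by countable closedness of $Y$, and check $x_p\in\bigcap_{n<\omega}U_{p\mid n}$; since these intersections are pairwise disjoint for distinct branches, the assignment $p\mapsto x_p$ is injective and $\{x_p:p\in\mathcal{C}\}\subseteq\overline{F}$ is uncountable. The final step, and the one I regard as the crux, is to convert this into a statement about a single retraction of the original skeleton. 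The set $F$ is countable, so enumerating $\{t_\sigma\}\subseteq\Gamma$ with $x_\sigma\in r_{t_\sigma}[X]$ and using that $\Gamma$ is up-directed and $\sigma$-complete, I obtain $s_0\in\Gamma$ above every $t_\sigma$; by Definition \ref{skeleton}$(ii)$ the map $r_{s_0}$ fixes each $r_{t_\sigma}[X]$, so $F\subseteq r_{s_0}[X]$ and therefore $\{x_p:p\in\mathcal{C}\}\subseteq\overline{F}\subseteq\overline{r_{s_0}[X]}=r_{s_0}[X]$. This makes $r_{s_0}[X]$ uncountable, contradicting $|r_{s_0}[X]|\le\omega$; hence $I$ is dense, as required. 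The main obstacle is precisely this last passage from the subspace construction to one retraction $r_{s_0}$ of the given family, which is exactly what the $\sigma$-completeness and up-directedness of $\Gamma$ are for.
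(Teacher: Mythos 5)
Your proof is correct, and its combinatorial core is the paper's own: reduce to a compact subspace without isolated points in which the induced space $Y$ is dense, run the Cantor-scheme construction of Lemma \ref{lema2} and Theorem \ref{isol} to get an uncountable set $\{x_p:p\in\mathcal{C}\}\subseteq\overline{F}$, and contradict countability of the images. But you do two things the paper's proof does not, and one of them fills a genuine gap. The paper's proof takes an open $U$ without isolated points, an open $W$ with $\overline{W}\subseteq U$, restricts the skeleton to $\overline{W}$ via Theorem \ref{teocuth1} and Remark \ref{obscuth}, and then invokes Theorem \ref{isol} --- whose hypotheses include zero-dimensionality, which is never verified for $\overline{W}$. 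Your observation that the hypothesis $|r_s[X]|\leq\omega$ itself forces $X$ to be zero-dimensional --- each $r_s[X]$ is a countable compact metrizable, hence zero-dimensional, space, and by the theorem of Section 2, $X=\underleftarrow{lim}\langle r_s(X),R^t_s,\Gamma\rangle$ embeds as a closed subspace of a product of zero-dimensional compacta --- is exactly the missing glue, and it also lets you take your witness $C$ clopen rather than merely closed, so the scheme can be built directly in $X$ without passing through the restricted skeleton. Your second explicit step --- interleaving up-directedness with $\sigma$-completeness to get $s_0\in\Gamma$ above all the $t_\sigma$, so that $r_{s_0}$ fixes each $r_{t_\sigma}[X]$ pointwise by condition $(ii)$, whence $F\subseteq r_{s_0}[X]$ and $\overline{F}\subseteq\overline{r_{s_0}[X]}=r_{s_0}[X]$ --- is also only implicit in the paper: it is the unstated passage, in the proof of Theorem \ref{isol}, from ``$\overline{F}$ is uncountable'' to ``some $\overline{r_s[X]}$ is uncountable'', and your version has the advantage of producing the contradiction against a single retraction of the original skeleton. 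One cosmetic point: you copied the paper's typo in condition $(2)$ of Lemma \ref{lema2}; the splitting condition should read $x_\sigma\notin U_{\sigma\hat{\hspace{0.1cm}}i}$ for $i\in\{0,1\}$ (the parent avoids both children), not $x_{\sigma\hat{\hspace{0.1cm}}i}\notin U_{\sigma\hat{\hspace{0.1cm}}i}$, which would contradict $x_\tau\in U_\tau$; your construction, and your argument that the points $x_{p\mid n}$ along a branch are pairwise distinct (needed so that an accumulation point $x_p$ exists), clearly use the corrected form.
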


\begin{proof}
Let's suppose $U\subseteq X$ be a open subset without isolated points. We take a open subset of $X$ such that $\overline{W}\subseteq U$. We have that $Y\cap \overline{W}$ is dense in $\overline{W}$, by the Theorem \ref{teocuth1} $\overline{W}$ admits an $r$-skeleton. Moreover, the $r$-skeleton is a subfamily of restrictions on $\overline{W}$ of the family $\{r_s:s\in \Gamma\}$. Using the Proposition \ref{isol}, we have a contradiction.
\end{proof}

\begin{example}
Let $\alpha$ be a cardinal number with $\alpha \geq \omega_1$. The space $[0,\alpha]$ with the $r$-skeleton given by Kubi\'s and Michalewski in \cite{kubis1}, is an example that satisfied  the Corollary \ref{corisol}. Besides, Somaglia in \cite{soma1} proves that  the double circle of Alexandroff of $[0,\omega_2]$ also admits such $r$-skeleton.
\end{example}

\begin{example}
In the paper \cite{reynaldo1}, the authors proves that the Alexandroff duplicate of a Corson space its again a Corson space.  The Alexandroff duplicate of a Corson space   contains a dense set of isolated points and admits an $r$-skeleton, but it could  fail  that $|r_s[AD(X)]| \leq \omega$ for every $s\in \Gamma$, this shows that condition of Corollary \ref{corisol} is not sufficient.
\end{example}

\begin{question}
Which compact spaces that contain a dense set of isolated points  admit an $r$-skeletons with countable images?
\end{question}


\section{$r$-skeletons and the Alexandroff Duplicate}


In this section, we study some topological  properties of the Alexandroff Duplicate of a compact space with an $r$-skeleton. In particular, we give conditions to extend an $r$-skeleton on $X$ to an $r$-skeleton on  the Alexandroff duplicate $AD(X)$.

\begin{lemma}[\cite{reynaldo1}]\label{rey1}
Let $X$ be a set and $\Gamma$ be an up-directed partially ordered set. If for $x\in X$ there is an assignment $s_x\in \Gamma$, then  there is a function $\psi:[X]^{\leq \omega}\rightarrow \Gamma$ such that:
\begin{enumerate}
\item For $x\in X$, $\psi(x)\geq s_x$;
\item if $A\subseteq B\in [X]^{\leq \omega}$, then $\psi(A)\leq \psi(B)$; and
\item if $\langle A_n\rangle_{n<\omega}\subseteq [X]^{\leq \omega}$ with $A_n\subseteq A_{n+1}$ and $A=\bigcup_{n<\omega}A_n$, then $\psi(A)=\sup \{\psi(A_n):n<\omega\}$.
\end{enumerate}
\end{lemma}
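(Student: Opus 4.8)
The plan is to construct $\psi$ in two stages: first define it on the finite subsets of $X$ so that it is monotone and respects the assignment $x\mapsto s_x$, and then extend it to all countable subsets as the supremum of its values along a cofinal chain of finite subsets. Throughout I shall assume, as is standard for the index set of an $r$-skeleton, that $\Gamma$ is also $\sigma$-complete; this is exactly what guarantees that the suprema below, and in particular the supremum demanded in clause $(3)$, actually exist in $\Gamma$.

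First I would define $\psi$ on $[X]^{<\omega}$ by induction on cardinality. Fix once and for all a base element $\gamma_0\in\Gamma$ and set $\psi(\emptyset)=\gamma_0$. For a singleton, up-directedness lets me choose $\psi(\{x\})\in\Gamma$ with $\psi(\{x\})\geq s_x$ and $\psi(\{x\})\geq\gamma_0$. For a finite $F$ with $|F|\geq 2$, the family $\{\psi(F'):F'\subsetneq F\}$ is finite, so up-directedness supplies an upper bound, and I let $\psi(F)$ be such a bound. By construction $\psi(F')\leq\psi(F)$ whenever $F'\subsetneq F$, and transitivity promotes this to full monotonicity of $\psi$ on $[X]^{<\omega}$; clause $(1)$ is built into the choice on singletons.

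Next I would extend $\psi$ to $A\in[X]^{\leq\omega}$. Enumerating $A=\{a_n:n<\omega\}$ and putting $F_n=\{a_0,\dots,a_n\}$, I set $\psi(A)=\sup_{n<\omega}\psi(F_n)$; since the $F_n$ increase, the sequence $\langle\psi(F_n)\rangle_n$ is increasing and its supremum exists by $\sigma$-completeness. The step I expect to be the main obstacle is to show that this value is independent of the enumeration, equivalently that $\psi(A)$ is the least upper bound of $\{\psi(F):F\in[A]^{<\omega}\}$ — which is what lets clause $(3)$, an assertion about genuine suprema, go through. For this I would note that every finite $F\subseteq A$ sits inside some $F_n$, so $\psi(F)\leq\psi(F_n)\leq\psi(A)$ by monotonicity, making $\psi(A)$ an upper bound of $\{\psi(F):F\in[A]^{<\omega}\}$; conversely any upper bound of that set dominates each $\psi(F_n)$ and hence dominates their supremum $\psi(A)$. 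This simultaneously establishes enumeration-independence and the least-upper-bound characterization $\psi(A)=\sup\{\psi(F):F\in[A]^{<\omega}\}$, and it shows $\psi$ agrees with its earlier definition on finite sets, where the chain is eventually constant.

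Finally I would read off the three clauses from this characterization. Clause $(1)$ is immediate. For $(2)$, if $A\subseteq B$ then $[A]^{<\omega}\subseteq[B]^{<\omega}$, so $\psi(A)$ is a supremum over a subfamily of the one defining $\psi(B)$, giving $\psi(A)\leq\psi(B)$. For $(3)$, writing $A=\bigcup_{n}A_n$ with $A_n\subseteq A_{n+1}$, monotonicity yields $\psi(A_n)\leq\psi(A)$ and hence $\sup_n\psi(A_n)\leq\psi(A)$; for the reverse inequality, each finite $F\subseteq A$ lies in some $A_n$ because $F$ is finite and the $A_n$ increase, so $\psi(F)\leq\psi(A_n)\leq\sup_m\psi(A_m)$, proving that $\sup_m\psi(A_m)$ bounds every $\psi(F)$ and therefore dominates $\psi(A)$. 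Combining the two inequalities gives $\psi(A)=\sup_n\psi(A_n)$, completing the verification.
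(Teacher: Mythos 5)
Your proof is correct: the paper itself states this lemma without proof (it is quoted from \cite{reynaldo1}), and your two-stage construction --- choosing $\psi$ on finite sets by induction via up-directedness, then extending to countable sets as the least upper bound of the values on finite subsets, which simultaneously yields enumeration-independence and clauses $(2)$--$(3)$ --- is essentially the standard argument used in that source. You were also right to flag that $\sigma$-completeness of $\Gamma$ must be assumed (as it is throughout the paper, since $\Gamma$ indexes an $r$-skeleton), for otherwise the suprema in your construction and in clause $(3)$ need not exist.
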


\begin{lemma}[\cite{casa1}]\label{rey2}
Let $X$ be a compact space and $\{r_s:s\in \Gamma\}$ be a family of retractions on  $X$ which satisfied the conditions $(i)-(iii)$ of the $r$-skeleton definition. Then for $x\in \overline{\bigcup_{s\in \Gamma}r_s(X)}$, $x=\lim_{s\in \Gamma}r_s(x)$.
\end{lemma}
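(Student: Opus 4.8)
The plan is to fix $x\in\overline{\bigcup_{s\in\Gamma}r_s(X)}$, write $Y=\bigcup_{s\in\Gamma}r_s(X)$, and analyze the net $\bigl(r_s(x)\bigr)_{s\in\Gamma}$: first I would show it converges, and then that its limit must be $x$. The case $x\in Y$ is immediate: if $y\in r_{s_0}(X)$ then $r_{s_0}(y)=y$, and condition $(ii)$ gives $r_s(y)=r_s(r_{s_0}(y))=r_{s_0}(y)=y$ for every $s\ge s_0$, so the net is eventually constant. The whole difficulty therefore lies in the points of $\overline Y\setminus Y$, and the role of the hypothesis $x\in\overline Y$ is precisely to let us approximate $x$ by such eventually-fixed points.

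First I would pin down the cluster points of the net. Let $z$ be any cluster point; since each $r_s(x)$ lies in $r_s(X)\subseteq Y$, automatically $z\in\overline Y$. I claim $r_s(z)=r_s(x)$ for every $s\in\Gamma$. Fix $s$ and choose a subnet $\bigl(r_{t_\alpha}(x)\bigr)_\alpha\to z$ that eventually satisfies $t_\alpha\ge s$ (such a subnet exists because $\Gamma$ is up-directed and $z$ is a cluster point). For $t_\alpha\ge s$, condition $(ii)$ yields $r_s(r_{t_\alpha}(x))=r_s(x)$, so the net $\bigl(r_s(r_{t_\alpha}(x))\bigr)_\alpha$ is eventually constantly $r_s(x)$; on the other hand, continuity of $r_s$ gives $r_s(r_{t_\alpha}(x))\to r_s(z)$. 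Hence $r_s(z)=r_s(x)$.

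Next I would prove that the net has at most one cluster point, which together with compactness of $X$ forces convergence. Suppose $z\neq z'$ were two cluster points, and choose open sets $U\ni z$ and $U'\ni z'$ with $\overline U\cap\overline{U'}=\emptyset$. Both $\{s:r_s(x)\in U\}$ and $\{s:r_s(x)\in U'\}$ are cofinal in $\Gamma$, so using up-directedness I can build a non-decreasing sequence $s_0\le s_1\le\cdots$ with $r_{s_{2n}}(x)\in U$ and $r_{s_{2n+1}}(x)\in U'$ for all $n$. Let $s_\infty=\sup_n s_n$, which lies in $\Gamma$ by $\sigma$-completeness. By condition $(iii)$ the sequence $\bigl(r_{s_n}(x)\bigr)_n$ converges to $r_{s_\infty}(x)$; but its even terms lie in the closed set $\overline U$ and its odd terms in $\overline{U'}$, so the limit would belong to $\overline U\cap\overline{U'}=\emptyset$, a contradiction. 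Thus the net converges; denote its limit by $\rho(x)$. Applying the previous paragraph to the cluster point $\rho(x)$ gives $r_s(\rho(x))=r_s(x)$ for all $s$, whence $\rho(\rho(x))=\rho(x)$, and of course $\rho(y)=y$ for $y\in Y$.

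The remaining, and hardest, step is to identify $\rho(x)=x$ when $x\in\overline Y$. Since $\rho$ is idempotent it fixes its image pointwise, and since $Y\subseteq\rho(X)$ it would suffice to prove $\overline Y\subseteq\rho(X)$; equivalently, that $x$ itself is a cluster point of $\bigl(r_s(x)\bigr)_s$ (the net having a unique cluster point, this forces $x=\rho(x)$). This is where the difficulty concentrates: given a neighbourhood $U$ of $x$ and $s_0\in\Gamma$ one must produce $s\ge s_0$ with $r_s(x)\in U$, and the natural attempt --- pick $y\in Y\cap U$, which is fixed from some index $s_y$ on, and transfer this to $x$ by continuity of a single $r_s$ --- runs into an essential ordering problem, since the index $s_y$ at which $y$ becomes fixed depends on $y$, while the modulus of continuity of $r_s$ depends on $s$. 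In effect one must interchange the limit along $\Gamma$ with the limit of a net from $Y$ converging to $x$, and this interchange is not a formal consequence of the continuity of the individual $r_s$ (indeed, one checks that no sequence from $Y$ need converge to a point of $\overline Y\setminus Y$). To overcome this I would organize the approximation through Lemma \ref{rey1}: assign to each $y\in Y$ an index $s_y$ with $r_s(y)=y$ for $s\ge s_y$, take the associated function $\psi\colon[Y]^{\le\omega}\to\Gamma$, and use the $\sigma$-completeness of $\Gamma$ together with condition $(iii)$ to pass to increasing sequences of approximants on which $(iii)$ can be invoked, thereby converting the eventual fixing of the points of $Y$ into the convergence $r_s(x)\to x$. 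I expect this last interchange to be the main obstacle of the proof.
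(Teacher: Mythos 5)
Your first three steps are correct: the eventual constancy on $Y$, the fact that every cluster point $z$ of the net satisfies $r_s(z)=r_s(x)$ for all $s$, and the unique-cluster-point argument via $\sigma$-completeness and condition $(iii)$. Note, however, that none of this uses $x\in\overline{\bigcup_{s\in\Gamma}r_s(X)}$: what you have actually shown is that $(r_s(x))_{s\in\Gamma}$ converges, to some $\rho(x)$, for \emph{every} $x\in X$. The lemma is precisely the assertion that this limit equals $x$ at points of the closure, and at exactly that point your text stops proving and starts planning: the last paragraph names an ``essential ordering problem,'' proposes to route it through Lemma \ref{rey1}, and concedes that you ``expect this last interchange to be the main obstacle of the proof.'' That is an acknowledged gap, and it is the entire content of the statement (which, incidentally, this paper does not prove either --- it imports it from \cite{casa1} --- so your argument must stand on its own). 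Moreover, the sketched mechanism is the wrong one: any scheme that approximates $x$ through countable subsets of $Y$ is doomed in general, because a point of $\overline{Y}$ need not lie in the closure of \emph{any} countable subset of $Y$ --- in $[0,\omega_1]$ with the Kubi\'s--Michalewski skeleton one has $Y=[0,\omega_1)$ and $x=\omega_1$, yet the lemma holds there. A symptom of the missing idea is that your proof never invokes condition $(i)$, which is essential.

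Here is how the gap is actually closed, inside a single metrizable image, which is where $(i)$ enters. Suppose $\rho(x)\neq x$; pick $f\in C(X)$ with $f(x)=1$ and $f(\rho(x))=0$, and $s_0\in\Gamma$ with $f(r_s(x))<1/4$ for all $s\geq s_0$. Recursively choose $y_{n+1}\in Y\cap\{f>3/4\}$ with $d_j(r_{s_j}(y_{n+1}),r_{s_j}(x))<2^{-n}$ for all $j\leq n$, where $d_j$ is a metric on the compact cosmic (hence metrizable) space $r_{s_j}(X)$; such $y_{n+1}$ exists because these conditions define an open neighbourhood of $x$, which meets $Y$ precisely because $x\in\overline{Y}$. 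Then take $s_{n+1}\geq s_n$ large enough that $r_s(y_{n+1})=y_{n+1}$ for all $s\geq s_{n+1}$, and put $t=\sup_n s_n\in\Gamma$. All the $y_n$ lie in the compact metrizable set $r_t(X)$, so some subsequence $y_{n_k}\to w\in r_t(X)$; continuity of each $r_{s_j}$ together with the closeness conditions gives $r_{s_j}(w)=r_{s_j}(x)$ for every $j$, whence by $(iii)$, $w=r_t(w)=\lim_j r_{s_j}(w)=\lim_j r_{s_j}(x)=r_t(x)$. But $f(w)\geq 3/4$ while $f(r_t(x))\leq 1/4$, a contradiction. Observe how this differs from your plan: the approximants $y_n$ do not converge to $x$ at all --- they converge to $r_t(x)$ --- and the contradiction is extracted from the values of a single separating function, with the metrizability of $r_t(X)$ supplying the sequential compactness that $X$ itself lacks; Lemma \ref{rey1} plays no role.
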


The next result appears implicitly in the development of several articles on the subject, here we provide a proof of it.
\begin{theorem}\label{numerables}
Let $X$ be a compact space, $\{r_s:s\in \Gamma\}$ be an $r$-skeleton on $X$ and $Y$ be the induced space. Then there is an $r$-skeleton $\{R_A: A\in [Y]^{\leq \omega}\}$ on $X$ such that
\begin{itemize}
\item $A\subseteq R_A(X)$, for all $A\in [Y]^{\leq \omega}$, and
\item $Y=\bigcup_{A\in [Y]^{\leq \omega}} R_A(X)$.
\end{itemize}
\end{theorem}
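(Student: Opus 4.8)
The plan is to reindex the given skeleton by the poset $[Y]^{\leq\omega}$ ordered by inclusion, which is automatically up-directed (the union of two countable sets is countable) and $\sigma$-complete (a countable increasing union of countable sets is countable), so it is a legitimate index for an $r$-skeleton. To each $x\in Y$ I first attach a witness $s_x\in\Gamma$ with $x\in r_{s_x}(X)$; this is possible precisely because $Y=\bigcup_{s\in\Gamma}r_s(X)$ is the induced space. Feeding the assignment $x\mapsto s_x$ into Lemma \ref{rey1} (applied with the set there taken to be $Y$) produces a map $\psi:[Y]^{\leq\omega}\to\Gamma$ satisfying $\psi(\{x\})\geq s_x$, the monotonicity $A\subseteq B\Rightarrow\psi(A)\leq\psi(B)$, and the $\sigma$-continuity $\psi\big(\bigcup_nA_n\big)=\sup_n\psi(A_n)$ on increasing chains. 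I then set $R_A:=r_{\psi(A)}$ for $A\in[Y]^{\leq\omega}$.

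Verifying axioms $(i)$--$(iii)$ of Definition \ref{skeleton} for $\{R_A\}$ is then a transport of the corresponding axioms for $\{r_s\}$ through $\psi$. Condition $(i)$ is immediate, since each $R_A(X)=r_{\psi(A)}(X)$ is cosmic. For $(ii)$, if $A\subseteq B$ then $\psi(A)\leq\psi(B)$ by monotonicity of $\psi$, and the absorption relations $r_{\psi(A)}=r_{\psi(A)}\circ r_{\psi(B)}=r_{\psi(B)}\circ r_{\psi(A)}$ coming from the original skeleton yield $R_A=R_A\circ R_B=R_B\circ R_A$. For $(iii)$, an increasing sequence $A_n$ with union $A$ gives an increasing sequence $\psi(A_n)$ with $\sup_n\psi(A_n)=\psi(A)$, so axiom $(iii)$ for $\{r_s\}$ delivers $R_A(x)=\lim_n R_{A_n}(x)$.

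Next I would read off the two bulleted properties. For $A\subseteq R_A(X)$: given $x\in A$ we have $s_x\leq\psi(\{x\})\leq\psi(A)$, and since $r_{s_x}(x)=x$, the absorption law $r_{\psi(A)}\circ r_{s_x}=r_{s_x}$ forces $R_A(x)=r_{\psi(A)}(x)=x$, so $x\in R_A(X)$. The equality $Y=\bigcup_A R_A(X)$ follows at once: the inclusion $\supseteq$ holds because every $R_A(X)=r_{\psi(A)}(X)\subseteq Y$, while $\subseteq$ holds because each $x\in Y$ lies in $R_{\{x\}}(X)$.

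The one axiom that does not transport mechanically is $(iv)$, the convergence $x=\lim_{A}R_A(x)$, and this is the step I expect to be the crux. A direct attack would require controlling the cofinal behaviour of the net $A\mapsto\psi(A)$ inside $\Gamma$, which is awkward because $\psi$ need not be cofinal. Instead I would sidestep the issue by invoking Lemma \ref{rey2}: since $\{R_A\}$ already satisfies $(i)$--$(iii)$ and its induced space is $\bigcup_A R_A(X)=Y$, whose closure is all of $X$ (density of $Y$ follows from axiom $(iv)$ for the original skeleton, as $r_s(x)\in Y$ and $x=\lim_s r_s(x)$, or from $\beta Y=X$), the lemma gives $x=\lim_A R_A(x)$ for every $x\in\overline{Y}=X$. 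This completes the verification that $\{R_A:A\in[Y]^{\leq\omega}\}$ is an $r$-skeleton with the two required properties.
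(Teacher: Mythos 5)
Your proposal is correct and takes essentially the same route as the paper's proof: pick witnesses $s_x$ with $r_{s_x}(x)=x$, feed them into Lemma \ref{rey1} to get $\psi$, define $R_A:=r_{\psi(A)}$, transport axioms $(i)$--$(iii)$ through the monotonicity and $\sigma$-continuity of $\psi$, and obtain $(iv)$ from Lemma \ref{rey2} once $\bigcup_A R_A(X)=Y$ is known to be dense in $X$. The only difference is cosmetic: you spell out the density of $Y$ (via $(iv)$ for the original skeleton, or $\beta Y=X$) where the paper leaves it implicit.
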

\begin{proof}
For each $x\in Y$, we pick $s_x\in \Gamma$ such that $r_{s_x}(x)=x$.  We consider the function $\psi:[Y]^{\leq \omega}\rightarrow \Gamma$  given by Lemma \ref{rey1}, which satisfied $(1)-(3)$.
Now, for each  $A\in [Y]^{\leq \omega}$ we define $R_A:X\rightarrow X$ by $R_A:=r_{\psi(A)}$.
 We shall prove that family $\{R_A:A\in [Y]^{\leq \omega}\}$ is an $r$-skeleton on $X$.  Since $\psi$ satisfy (1)-(3) of Lemma \ref{rey1} and $\{r_s:s\in \Gamma\}$ is an $r$-skeleton, trivially  $\{R_A:A\in [Y]^{\leq \omega}\}$ satisfy $(i)-(iii)$ of the $r$-skeleton definition.
 Finally, by Lemma \ref{rey2},   we have that $x=\lim_{A\in [Y]^{\leq \omega}}R_A(x)$ for all $x\in \overline{\bigcup_{A\in [Y]^{\leq \omega} }R_A(X)}$. By the choice of the family $\{s_x:x\in Y\}$, we trivially obtain that $A\subseteq R_A(X)$, for all $A\in [Y]^{\leq \omega}$. It then follows that $\bigcup_{A\in [Y]^{\leq \omega}}R_A(X)=Y$. Therefore, $\{R_A: A\in [Y]^{\leq \omega}\}$ is an $r$-skeleton on $X$.
\end{proof}

\begin{lemma}\label{teo1}
Let $X$ be a compact space, suppose that $AD(X)$ has an $r$-skeleton  with induced space $\hat{Y}$ and  $Y=\pi(\hat{Y}\cap X_0)$. Then for any $B\in [X\setminus Y]^{\leq \omega}$ we have
\begin{itemize}
\item[$(*)$]  $B$ is discrete in $X \setminus Y$, and
\item[$(**)$]  $cl_X(B)\setminus B\subseteq Y$ and $cl_X(B)\setminus B$  is a cosmic space.
\end{itemize}
\end{lemma}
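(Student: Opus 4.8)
The plan is to lift the countable set $B\subseteq X\setminus Y$ to the isolated ``upper'' copy $X_1$ of $AD(X)$ and then exploit the countable closedness of the induced space $\hat Y$. Write $\{r_s:s\in\Gamma\}$ for the given $r$-skeleton on $AD(X)$, so that $\hat Y=\bigcup_{s\in\Gamma}r_s(AD(X))$, and let $B'$ denote the set of accumulation points of $B$ in $X$ (so $cl_X(B)=B\cup B'$). The first observation I would record is that $\hat Y$ is dense in $AD(X)$ --- this is immediate from condition $(iv)$ of Definition \ref{skeleton}, since $x=\lim_{s\in\Gamma}r_s(x)$ puts every $x$ in $\overline{\hat Y}$ --- and therefore $\hat Y$ must contain every isolated point of $AD(X)$; in particular $X_1\subseteq\hat Y$. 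Hence the countable set $\widetilde B:=B\times\{1\}$ lies in $\hat Y$, i.e. $\widetilde B\in[\hat Y]^{\leq\omega}$.

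The crux is then a closure computation in $AD(X)$. Because the points of $X_1$ are isolated, no point of $X_1$ outside $\widetilde B$ is a limit of $\widetilde B$; and a point $(x,0)$ belongs to $cl_{AD(X)}(\widetilde B)$ exactly when every basic neighbourhood $(U\times\{0,1\})\setminus\{(x,1)\}$ meets $\widetilde B$, that is, exactly when $U$ meets $B\setminus\{x\}$ for every neighbourhood $U$ of $x$ --- equivalently when $x\in B'$. This yields
\[
cl_{AD(X)}(\widetilde B)=\widetilde B\cup\bigl(B'\times\{0\}\bigr).
\]
Since $\hat Y$ is countably closed (the quoted theorem of Kubi\'s) and $\widetilde B\in[\hat Y]^{\leq\omega}$, this closure sits inside $\hat Y$; in particular $B'\times\{0\}\subseteq\hat Y\cap X_0$, and applying $\pi$ gives the single decisive inclusion $B'\subseteq\pi(\hat Y\cap X_0)=Y$.

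From $B'\subseteq Y$ both halves of $(*)$ and the first half of $(**)$ fall out at once. As $B\subseteq X\setminus Y$, we get $B\cap B'=\emptyset$, so no point of $B$ is an accumulation point of $B$; hence each $b\in B$ has a neighbourhood meeting $B$ only in $b$, i.e. $B$ is relatively discrete, and since $B\subseteq X\setminus Y$ discreteness in $X$ and in $X\setminus Y$ coincide, giving $(*)$. For the first part of $(**)$, simply note $cl_X(B)\setminus B=B'\setminus B\subseteq B'\subseteq Y$.

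It remains to show $cl_X(B)\setminus B$ is cosmic, and here I would invoke Theorem \ref{numerables} applied to the $r$-skeleton on $AD(X)$: there is an $r$-skeleton $\{R_A:A\in[\hat Y]^{\leq\omega}\}$ with $A\subseteq R_A(AD(X))$ for each $A$. Taking $A=\widetilde B$, the set $R_{\widetilde B}(AD(X))$ is a cosmic retract of $AD(X)$, hence a cosmic closed set containing $\widetilde B$, so it contains $cl_{AD(X)}(\widetilde B)$; as cosmicity is hereditary, $cl_{AD(X)}(\widetilde B)$ is cosmic and so is its subspace $B'\times\{0\}$. Finally, $\pi$ restricts to a homeomorphism of $X_0$ onto $X$, so $B'$ is cosmic and therefore its subspace $cl_X(B)\setminus B$ is cosmic as well. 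The only genuinely delicate point in the whole argument is the passage to $\widetilde B\subseteq X_1$ together with the closure identity above yielding $B'\subseteq Y$; once this ``upper copy'' device is in place, the remaining steps are routine uses of countable closedness, heredity of cosmicity, and Theorem \ref{numerables}.
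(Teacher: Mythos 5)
Your proposal is correct and follows essentially the same route as the paper's proof: lift $B$ to $B\times\{1\}\subseteq\hat Y$, use countable closedness of $\hat Y$ to get $cl_X(B)\setminus B\subseteq Y$ (hence $(*)$), and then use the Theorem \ref{numerables}-style skeleton $\{R_C:C\in[\hat Y]^{\leq\omega}\}$ with $cl_{AD(X)}(B\times\{1\})\subseteq R_{B\times\{1\}}(AD(X))$ together with heredity of cosmicity for $(**)$. You merely make explicit some steps the paper leaves implicit, such as $X_1\subseteq\hat Y$ via density of the induced space and the closure identity $cl_{AD(X)}(B\times\{1\})=(B\times\{1\})\cup\bigl((cl_X(B)\setminus B)\times\{0\}\bigr)$ for discrete $B$.
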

\begin{proof}
By the Theorem \ref{teo1}, we may considerer an $r$-skeleton on $AD(X)$ of the form $\{r_C:C\in [\hat{Y}]^{\leq \omega}\}$. Let $B\in [X\setminus Y]^{\leq \omega}$. If $B$ is a finite set, then  the result follows immediately. Let us suppose that $|B|=\omega$ and $p\in cl_X(B)\setminus B$. Observe that $(p,0)\in cl_{AD(X)}(B\times\{1\})$. Since $B\times\{1\}\subseteq\hat{Y}$ and $\hat{Y}$ is countably closed, it follows that $(p,0)\in \hat{Y}$ and so $p\in Y$. Hence, $cl_X(B)\setminus B\subseteq Y$ and  we deduce that $B$ is discrete in $X\setminus Y$.  Now, we note that $(cl_X(B)\setminus B)\times \{0\}\subseteq cl_{AD(X)}(B\times\{1\})$. Since that $B\times \{1\}\in [\hat{Y}]^{\leq \omega}$, we have $cl_{AD(X)}(B\times\{1\})\subseteq r_{B\times \{1\}}(X)$.  Using the cosmicity of $r_{B\times \{1\}}(X)$, we conclude that $cl_X(B)\setminus B\subseteq Y$ is also cosmic.
\end{proof}

\begin{lemma}\label{lemma51}
Let $X$ be a compact space which admits an $r$-skeleton  $\{r_s:s\in \Gamma\}$ with induced space $Y$. Suppose that   for   $B\in [X\setminus Y]^{\leq \omega}$ the conditions $(*)$ and $(**)$ from above hold. Then for each $s\in \Gamma$ such that $cl_X(B)\setminus B\subseteq r_s(X)$ and for every $A\in [r_s(X)]^{\leq\omega}$, the mapping  $R_{(A,B,s)}:AD(X)\rightarrow AD(X)$ defined as
\[
R_{(A,B,s)}(x,i) :=
\begin{cases}
(x,1) & \textrm{ if $x \in A\cup B$ and $i=1$}\\
(r_{s}(x),0) & \textrm{ in another case}, \\
\end{cases}
\]
 for every $(x,i)\in AD(X)$, is a retraction on $AD(X)$.
\end{lemma}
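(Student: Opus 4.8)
The plan is to check the three defining properties of a retraction for $R_{(A,B,s)}$: that it maps $AD(X)$ into itself, that it is idempotent, and that it is continuous; continuity together with idempotency then exhibits it as a retraction onto its fixed-point set. Well-definedness is immediate, as both $(x,1)$ and $(r_s(x),0)$ belong to $X\times\{0,1\}=AD(X)$. For idempotency I would evaluate $R_{(A,B,s)}\circ R_{(A,B,s)}$ on the two branches of the definition: if $x\in A\cup B$ and $i=1$, the value $(x,1)$ again satisfies the first clause and is fixed; otherwise the value $(r_s(x),0)$ has second coordinate $0$, so the second clause applies and sends it to $(r_s(r_s(x)),0)=(r_s(x),0)$, using $r_s\circ r_s=r_s$. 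Hence $R_{(A,B,s)}$ is idempotent.

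The real content is continuity, and the main obstacle lies exactly there. Points of $X\times\{1\}$ are isolated in $AD(X)$, so nothing need be checked at them; I would therefore fix $(x,0)$, observe $R_{(A,B,s)}(x,0)=(r_s(x),0)$, and take an arbitrary basic neighbourhood $W=(V\times\{0,1\})\setminus\{(r_s(x),1)\}$ of it with $V$ open around $r_s(x)$. Continuity of $r_s$ gives an open $U_1\ni x$ with $r_s(U_1)\subseteq V$, and inside a basic neighbourhood $(U\times\{0,1\})\setminus\{(x,1)\}$ with $U\subseteq U_1$ every point handled by the second (``projection'') clause lands in $W$. The only possibly troublesome points are the ``kept'' ones $(y,1)$ with $y\in(A\cup B)\cap U$ and $y\neq x$, whose image is $(y,1)$ itself; these threaten continuity exactly when $x$ is a limit of $A\cup B$, and controlling them is the crux.

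To do so I would split on whether $x$ is fixed by $r_s$. If $r_s(x)=x$, I shrink $U$ to $U_1\cap V$; then any kept $y\in U\subseteq V$ satisfies $y\neq x=r_s(x)$, so $(y,1)\in W$, and the whole neighbourhood maps into $W$. If $r_s(x)\neq x$, equivalently $x\notin r_s(X)$, the hypotheses separate $x$ from the kept points: since $A\subseteq r_s(X)$ and $r_s(X)$ is compact, hence closed, we have $cl_X(A)\subseteq r_s(X)$ and thus $x\notin cl_X(A)$; and since $cl_X(B)\setminus B\subseteq r_s(X)$ by the choice of $s$, the point $x$ either lies off the closed set $cl_X(B)$ or belongs to $B$, in which case condition $(*)$ (discreteness of $B$ in $X\setminus Y$) furnishes a neighbourhood of $x$ meeting $B$ only in $\{x\}$. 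Intersecting these with $U_1$ yields a $U$ with $(A\cup B)\cap U\subseteq\{x\}$, so no kept point other than the removed $(x,1)$ survives and every remaining point is treated by the projection clause. This gives continuity at $(x,0)$ in all cases, completing the verification that $R_{(A,B,s)}$ is a retraction.
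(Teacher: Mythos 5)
Your proof is correct, and it establishes continuity by a route that is essentially the contrapositive of the paper's. The paper argues with nets: given a net $(x_\lambda,i_\lambda)\to(x,0)$, it reduces to a net in $X_1$ lying in $A\cup B$, and shows the limit must then satisfy $r_s(x)=x$ --- the subcase $x\in B$ is ruled out by $(*)$ together with $B\cap Y=\emptyset$ (a net of kept points accumulating at $x\in B$ would have to be eventually in $A$, forcing $x\in cl_X(A)\subseteq r_s(X)\subseteq Y$), leaving $x\in cl_X(A)\cup\bigl(cl_X(B)\setminus B\bigr)\subseteq r_s(X)$, so that the kept points $(x_\lambda,1)$ converge to $(x,0)=R_{(A,B,s)}(x,0)$ anyway. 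You instead work with basic neighbourhoods and split on whether $r_s(x)=x$: when it does, kept points near $x$ are harmless because you shrink the neighbourhood into $V$; when it does not, you use closedness of the compact set $r_s(X)$ (hence $cl_X(A)\subseteq r_s(X)$), the hypothesis $cl_X(B)\setminus B\subseteq r_s(X)$, and $(*)$ to excise every kept point other than the deleted $(x,1)$ from a neighbourhood of $x$. The two arguments use exactly the same ingredients, but yours buys some transparency: you state explicitly that $r_s(X)$ is compact and therefore closed, a fact the paper invokes silently when passing from $x\in cl_X(A)$ to $r_s(x)=x$, and your subcase $x\in B$ is dispatched directly by discreteness instead of by contradiction. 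One half-sentence worth adding in a final writeup: $(*)$ literally gives a neighbourhood of $x$ relative to the subspace $X\setminus Y$, but since $B\subseteq X\setminus Y$ this does yield an open $O\subseteq X$ with $O\cap B=\{x\}$, which is what you use. The idempotency computation coincides with the paper's.
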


\begin{proof}
Let $A\in [r_s(X)]^{\leq\omega}$ and assume that $cl_X(B)\setminus B\subseteq r_s(X)$. First, we prove that   $R_{(A,B,s)}$ is  a continuous function. Let $\langle(x_\lambda,i_\lambda) \rangle_{\lambda\in \Lambda} $ be  net such that $(x_\lambda,i_\lambda)\rightarrow (x,i)$. If $i=1$, then $\langle (x_\lambda,i_\lambda) \rangle_{\lambda\in \Lambda} $ is eventually constant and hence  $\langle R_{(A,B)}(x_\lambda,i_\lambda)\rangle_{\lambda\in \Lambda} $  is so.  Now, we consider the case when $i=0$ and assume that $\langle(x_\lambda,i_\lambda) \rangle_{\lambda\in \Lambda} $ is not trivial. Hence,  $R_{(A,B,s)}(x,0)=(r_s(x),0)$. We may suppose that either $\langle (x_\lambda,i_\lambda) \rangle_{\lambda\in \Lambda}\subseteq X_0$ or $\langle (x_\lambda,i_\lambda) \rangle_{\lambda\in \Lambda}\subseteq X_1$. As $\langle (x_\lambda,i_\lambda) \rangle_{\lambda\in \Lambda}\subseteq X_0$, we have that  $R_{(A,B,s)}(x_\lambda,i_\lambda)=(r_s(x),0)$ for all $\lambda \in \Lambda$, and since $r_s$ continuous, we obtain that $ R_{(A,B,s)}(x_\lambda,i_\lambda)\rightarrow R_{(A,B,s)}(x,0)$. Now,  assume that $\langle (x_\lambda,i_\lambda) \rangle_{\lambda\in \Lambda}\subseteq X_1$. If $\langle x_\lambda \rangle_{\lambda\in \Lambda}$ contains a subnet that lies eventually  in $A\cup B$, then $x\in cl_X(A\cup B)$.  Without loss of generality, we suppose that $x_\lambda \in A \cup B$ for every $\lambda \in \Lambda$. Then, $R_{(A,B,s)}(x_\lambda,i_\lambda)=(x_\lambda,1)$ for all $\lambda\in \Lambda$.  If $x\in B$,  then we deduce from  $(*)$ that $\langle x_\lambda\rangle_{\lambda \in \Lambda}$ is eventually in $A$. Since $A\subseteq r_s(X)$, we have $x\in r_s(X)$,  but $Y\cap B=\emptyset$. So, $x\in cl_X(A)\cup \big( cl_X(B)\setminus B\big)$. Since $A\cup \big( cl_X(B)\setminus B\big) \subseteq r_s(X)$, we deduce $r_s(x)=x$ and we conclude  $R_{(A,B,s)}(x_\lambda,i_\lambda)\rightarrow R_{(A,B,s)}(x,0)$. In other hand, $\langle x_\lambda \rangle_{\lambda\in \Lambda}\subseteq X\setminus \big(A\cup B\big)$ and $R_{(A,B,s)}(x_\lambda,i_\lambda) = (r_{s}(x_\lambda),0)$. Using the continuity of $r_{s}$, we have that $R_{(A,B,s)} (x_\lambda,i_\lambda)=(r_s(x_\lambda),0)\rightarrow  (r_s(x),0)=R_{(A,B,s)} (x,0)$. Therefore, $R_{(A,B,s)}$ is  a continuous function. Finally, if $(x,i)\in AD(X)$, then
\begin{align*}
R_{(A,B,s)}\circ R_{(A,B,s)}(x,i) &=
\begin{cases}
R_{(A,B,s)}(x,1) & \textrm{ if $x \in A\cup B$ and $i=1$}\\
R_{(A,B,s)}(r_{s}(x),0) & \textrm{ in another case}, \\
\end{cases}\\
&=
\begin{cases}
(x,1) & \textrm{ if $x \in A\cup B$ and $i=1$}\\
(r_{s}(r_{s}(x)),0) & \textrm{ in another case}, \\
\end{cases}\\
&=
\begin{cases}
(x,1) & \textrm{ if $x \in A\cup B$ and $i=1$}\\
(r_{s}(x),0) & \textrm{ in another case}, \\
\end{cases}\\
&=R_{(A,B,s)}(x,i).
\end{align*}
That is, $R_{(A,B,s)}$ is a retract on $AD(X)$.
\end{proof}

The main theorem of  this article is the following. In the next, we will consider    $\sigma$-complete up-directed partially ordered  sets $\Gamma$, where $\Gamma$ will be a subset of  $[Y]^{\leq \omega}\times[X\setminus Y]^{\leq \omega}$ with the order $\preceq$ defined by $(A,B)\preceq (A',B')$ if $A\subseteq A'$ and $B\subseteq B'$. Also, we remark that sometimes if $\langle(A_n,B_n)\rangle_{ n<\omega}\subseteq \Gamma$, then $\sup_\Gamma\langle(A_n,B_n)\rangle_{ n<\omega}$ is not necessarily $(\bigcup_{n<\omega}A_n,\bigcup_{n<\omega}B_n)$.

\medskip

The following remark  plays a very important rule in the proof of the next theorem.

\begin{remark}\label{obscuth}
Let $X$ be a compact space with $r$-skeleton $\{r_s:s\in \Gamma\}$ and $F$ be a closed subset of $X$. The Theorem \ref{teocuth1} asserts that   if $Y\cap F$ is dense in $F$, then $F$ admits an $r$-skeleton. In the proof of  this theorem, M. Cuth proved that the subfamily $\Gamma'=\{s\in \Gamma:r_s(F)\subseteq F\}$ is $\sigma$-closed and cofinal in $\Gamma$ and that  $\{r_s\upharpoonright_{F}:s\in \Gamma'\}$ is the $r$-skeleton on $F$. These facts will be very useful in the proof of the next theorem.
\end{remark}

\begin{theorem}\label{principal}
Let be  $X$ a compact space.
$AD(X)$ admits an $r$-skeleton if and only if  there is an $r$-skeleton $\{r_{(A,B)}:(A,B)\in \Gamma\}$ on $X$ with induced space $Y$ such that $\Gamma\subseteq [Y]^{\leq \omega}\times [X\setminus Y]^{\leq \omega}$  is $\sigma$-closed and cofinal in $[Y]^{\leq \omega}\times [X\setminus Y]^{\leq \omega}$ and  the next conditions hold:

\noindent  For every $B\in [X\setminus Y]^{\leq \omega}$,
\begin{itemize}
\item[$(*)$]  $B$ is discrete in $X \setminus Y$,
\item[$(**)$]  $cl_X(B)\setminus B\subseteq Y$ and $cl_X(B)\setminus B$ is cosmic;
\end{itemize}
 and
\begin{itemize}
\item[$(***)$]  $A\subseteq r_{(A,B)}(X)$ and  $cl_X(B)\setminus B\subseteq r_{(A,B)}(X)$ for every $(A,B)\in \Gamma$.
\end{itemize}
\end{theorem}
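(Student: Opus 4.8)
The plan is to prove the two implications separately: sufficiency ($\Leftarrow$) is a construction built on Lemma \ref{lemma51}, while necessity ($\Rightarrow$) is an extraction based on Theorem \ref{soma2}, Lemma \ref{teo1}, and a reindexing.

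For sufficiency, assume the $r$-skeleton $\{r_{(A,B)}:(A,B)\in\Gamma\}$ on $X$ with the stated properties is given. For each $(A,B)\in\Gamma$, conditions $(*)$, $(**)$ and $(***)$ ensure that $A\in[r_{(A,B)}(X)]^{\leq\omega}$ and $cl_X(B)\setminus B\subseteq r_{(A,B)}(X)$, so Lemma \ref{lemma51} produces a retraction $R_{(A,B)}$ on $AD(X)$ with $R_{(A,B)}(x,1)=(x,1)$ for $x\in A\cup B$ and $R_{(A,B)}(x,i)=(r_{(A,B)}(x),0)$ otherwise. I would then check that $\{R_{(A,B)}:(A,B)\in\Gamma\}$ is an $r$-skeleton on $AD(X)$. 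Axiom $(i)$ is clear, since the image of $R_{(A,B)}$ is $(A\cup B)\times\{1\}\cup r_{(A,B)}(X)\times\{0\}$, the union of a cosmic space and countably many isolated points, hence cosmic. Axiom $(ii)$ follows by a short case analysis from the identities $r_{(A,B)}=r_{(A,B)}\circ r_{(A',B')}=r_{(A',B')}\circ r_{(A,B)}$ valid for $(A,B)\preceq(A',B')$. Axiom $(iv)$ uses cofinality of $\Gamma$ in the product: for $(x,1)$ one finds $(A,B)\in\Gamma$ with $x\in A\cup B$, whence $R_{(A',B')}(x,1)=(x,1)$ for all $(A',B')\succeq(A,B)$; for $(x,0)$ one combines $\lim_{(A,B)}r_{(A,B)}(x)=x$ with the description of the neighbourhoods of $(x,0)$ in $AD(X)$.

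The delicate point, and the main obstacle, is axiom $(iii)$. For an increasing sequence $\langle(A_n,B_n)\rangle_{n<\omega}$ with $\sup_\Gamma=(A,B)$, axiom $(iii)$ of the $X$-skeleton gives $r_{(A,B)}(x)=\lim_n r_{(A_n,B_n)}(x)$, which settles every $(x,0)$ and every $(x,1)$ with $x\notin A\cup B$. The remaining case, $(x,1)$ with $x\in A\cup B$, requires $x\in A_n\cup B_n$ for some $n$, so that $R_{(A_n,B_n)}(x,1)=(x,1)$ eventually and the net converges to the isolated point $(x,1)$. The remark preceding the statement warns that $\sup_\Gamma$ of a sequence need not be the pair of unions $(\bigcup_n A_n,\bigcup_n B_n)$; reconciling the $\Gamma$-supremum with these unions, so that no point of $A\cup B$ escapes every $A_n\cup B_n$, is where the $\sigma$-closedness of $\Gamma$ is used and is the heart of the verification of $(iii)$.

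For necessity, suppose $AD(X)$ has an $r$-skeleton with induced space $\hat Y$ and put $Y=\pi(\hat Y\cap X_0)$. By Theorem \ref{soma2}, through Cuth's Theorem \ref{teocuth1}, the space $X\cong X_0$ carries an $r$-skeleton, whose induced space I would identify with $Y$; normalising it via Theorem \ref{numerables} gives a skeleton $\{\rho_C:C\in[Y]^{\leq\omega}\}$ with $C\subseteq\rho_C(X)$ and $Y=\bigcup_C\rho_C(X)$. Lemma \ref{teo1} then yields $(*)$ and $(**)$ for every $B\in[X\setminus Y]^{\leq\omega}$. Using $(**)$ I would attach to each $B$ a countable dense subset $D_B$ of the cosmic set $cl_X(B)\setminus B\subseteq Y$; as each image $\rho_C(X)$ is closed in $X$, any index whose image contains $D_B$ contains $cl_X(B)\setminus B$ as well. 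Rendering the assignment $B\mapsto D_B$ monotone and $\sigma$-continuous by Lemma \ref{rey1}, I would set $r_{(A,B)}:=\rho_{A\cup D_B}$ and take $\Gamma$ to be the $\sigma$-closed cofinal family of pairs $(A,B)$ closed under this assignment; then $A\cup D_B\subseteq r_{(A,B)}(X)$ gives $(***)$, and the reindexed family is an $r$-skeleton on $X$ with induced space $Y$. The secondary obstacle here is to arrange $\Gamma$ to be $\sigma$-closed and cofinal in the product while its retraction images keep absorbing both $A$ and $cl_X(B)\setminus B$; this closing-off is exactly what the monotone $\sigma$-continuous $\psi$ of Lemma \ref{rey1} is designed to provide.
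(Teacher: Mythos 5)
Your sufficiency direction is essentially the paper's proof: build $R_{(A,B)}$ from Lemma \ref{lemma51} with $s=(A,B)$ and verify $(i)$--$(iv)$, and your reading of the delicate point in $(iii)$ is exactly right --- since $\Gamma$ is $\sigma$-closed in the ambient product $[Y]^{\leq\omega}\times[X\setminus Y]^{\leq\omega}$, the supremum of an increasing sequence is the pair of unions, so any $x\in A\cup B$ already lies in some $A_{n_0}\cup B_{n_0}$, which is precisely how the paper settles the case $(x,1)$ with $x\in A\cup B$.

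The necessity direction, however, has a genuine gap, and it sits exactly where you invoke Lemma \ref{rey1} to render $B\mapsto D_B$ monotone and $\sigma$-continuous. First, Lemma \ref{rey1} only monotonizes \emph{pointwise} data: it starts from an assignment $x\mapsto s_x$ of single points and produces $\psi(A)$ essentially as a supremum over $x\in A$. Your assignment $B\mapsto D_B$ depends on $B$ as a whole --- $D_B$ is a dense subset of $cl_X(B)\setminus B$, and the limit points of $B$ are not determined by its individual elements --- so the lemma does not apply, and no pointwise generation of $D_B$ is available. Second, the required $\sigma$-continuity is structurally impossible for any such assignment in general: if $B=\bigcup_n B_n$ with each $B_n$ finite, then $cl_X(B_n)\setminus B_n=\emptyset$ for every $n$, while $cl_X(B)\setminus B$ may be nonempty; $\sigma$-continuity forces $\psi(B)=\bigcup_n\psi(B_n)$, so nothing in $\psi(B)$ can have been chosen with the new limit points of $B$ in view, and $cl_X(B)\setminus B\subseteq \rho_{A\cup\psi(B)}(X)$ fails at exactly the limit stages where $\sigma$-closedness of your $\Gamma$ would need it. Note that if your closing-off did work, it would show that whenever $AD(X)$ has an $r$-skeleton the $\omega$-monotone $\psi$ hypothesized in Theorem \ref{novo2} exists, which is precisely the question the authors leave open at the end of the paper.

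The paper's necessity proof avoids this obstruction by never trying to choose the absorbing sets on $X$: it normalizes the skeleton on $AD(X)$ itself via Theorem \ref{numerables} to $\{R_C:C\in[\hat Y]^{\leq\omega}\}$ with $C\subseteq R_C(AD(X))$, and sets $R_{(A,B)}:=R_{A\times\{0,1\}\cup B\times\{1\}}$. Since the image of each $R_C$ is a closed set containing $C\supseteq B\times\{1\}$, it automatically contains $cl_{AD(X)}(B\times\{1\})\supseteq (cl_X(B)\setminus B)\times\{0\}$ --- at every index, including suprema of increasing sequences, with no foresight needed. Restricting to the closed set $X_0$ via Remark \ref{obscuth} (Cúth's $\sigma$-closed cofinal subfamily $\Gamma=\{(A,B):R_{(A,B)}(X_0)\subseteq X_0\}$) then yields the $r$-skeleton $r_{(A,B)}=\pi\circ(R_{(A,B)}\upharpoonright_{X_0})$ on $X$ with $(***)$ for free, while $(*)$ and $(**)$ come from Lemma \ref{teo1} as in your sketch. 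So the correct route is to transport the duplicate's skeleton down to $X$, not to rebuild the pairs-indexed skeleton from an abstract skeleton on $X$ obtained through Theorem \ref{soma2}.
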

\begin{proof}
Necessity. Suppose that $AD(X)$ admits an $r$-skeleton with induced space $\hat{Y}$ and observe that $X_0\subseteq \hat{Y}$. According to Theorem \ref{numerables}, we may assume that such $r$-skeleton  is of the form  $\{R_C:C\in [\hat{Y}]^{\leq \omega}\}$ and satisfies the properties of the theorem. Consider the set $Y=\pi(\hat{Y}\cap X_0)$  and put $\Gamma'=[Y]^{\leq \omega}\times [X\setminus Y]^{\leq \omega}$.  For $(A,B)\in \Gamma'$, we define $R_{(A,B)}=R_{A\times \{0,1\}\cup B\times\{1\}}$. We claim that $\{R_{(A,B)}:(A,B)\in \Gamma'\}$ is an $r$-skeleton on $AD(X)$ with induced space $\hat{Y}$. The conditions $(i), (ii)$ and $(iii)$ are hold because $\{R_C:C\in [\hat{Y}]^{\leq \omega}\}$ is an $r$-skeleton. For the condition $(iv)$, let $(x,i)\in \hat{Y}$. Choose $(A,B)\in \Gamma'$ so that $(x,i)\in A\times \{0,1\}\cup B\times\{1\}$. It follows that $(x,i)=R_{A\times \{0,1\}\cup B\times\{1\}}(x,i)=R_{(A,B)}(x,i)$.
Thus, we have proved that  $\{R_{(A,B)}:(A,B)\in \Gamma'\}$ is an $r$-skeleton on $AD(X)$ with induced space $\hat{Y}$. By Remark \ref{obscuth},
 the set $\Gamma=\{(A,B)\in \Gamma':R_{(A,B)}(X_0)\subseteq X_0\}$ is $\sigma$-closed and cofinal in $\Gamma'$ and $\{R_{(A,B)}\upharpoonright_{X_0}:(A,B)\in \Gamma\}$ is an $r$-skeleton on $X_0$, with induced  space  $\hat{Y}\cap X_0$. For each $(A,B)\in \Gamma$, we define $r_{(A,B)}=\pi(R_{(A,B)}\upharpoonright_{X_0})$. Hence,  $\{r_{(A,B)}:(A,B)\in \Gamma\}$ is an $r$-skeleton on $X$ with induced space $Y$. By Lemma \ref{teo1}, the conditions $(*)$ and $(**)$  hold.  The  condition $(***)$ is easy to verify.\\

Sufficiency. Now, let $\{r_{(A,B)}:(A,B)\in \Gamma\}$ an $r$-skeleton on $X$ which satisfies the condition $(*)-(***)$, where $\Gamma\subseteq [Y]^{\leq \omega}\times [X\setminus Y]^{\leq \omega}\}$  is $\sigma$-closed and cofinal in $[Y]^{\leq \omega}\times [X\setminus Y]^{\leq \omega}$ and $Y$ the induced space.
For each $(A,B)\in \Gamma$, we know that $cl_X(B)\setminus B\subseteq r_{(A,B)}(X)$ and $A\subseteq r_{(A,B)}(X)$. Set $R_{(A,B)}=R_{(A,B,(A,B))}$, where  $R_{(A,B,(A,B))}$  is the retraction of  Lemma \ref{lemma51}.
 We claim that $\{R_{(A,B)}:(A,B)\in \Gamma\}$ is an $r$-skeleton on $AD(X)$. Indeed, we shall prove that the conditions $(i)-(iv)$ of the $r$-skeleton definition  hold.
\begin{enumerate}
\item[$(i)$] If  $(A,B)$, then $R_{(A,B)}(AD(X))=(r_{(A,B)}(X)\times \{0\})\cup ((A\cup B) \times \{1\})$ is a cosmic space.
\item[$(ii)$] Let  $(A,B)\preceq (A',B')$. Fix $(x,i)\in AD(X)$. Then
\begin{align*}
R_{(A,B)}\circ R_{(A',B')}(x,i)&=
\begin{cases}
R_{(A,B)}(x,1) & \mbox{ if }x \in A'\cup B'\mbox{ and }i=1\\
R_{(A,B)}(r_{(A',B')}(x),0) & \mbox{ in another case} \\
\end{cases} \\
&
=
\begin{cases}
\begin{cases}
(x,1) & \mbox{ if }x \in A\cup B\mbox{ and }i=1\\
(r_{(A,B)}(x),0) & \mbox{ if }x \in (A'\cup B')\setminus (A\cup B) \mbox{ and }i=1
\end{cases}
\\
(r_{(A,B)}(r_{(A',B')}(x)),0)  \mbox{ in another case} \\
\end{cases} \\
&=
\begin{cases}
(x,1) & \mbox{ if }x \in A\cup B\mbox{ and }i=1\\
(r_{(A,B)}(x),0) & \mbox{ in another case} \\
\end{cases} \\
&= R_{(A,B)}(x,i).
\end{align*}

And we also have that
\begin{align*}
R_{(A',B')}\circ R_{(A,B)}(x,i)&=
\begin{cases}
R_{(A',B')}(x,1) & \mbox{ if }x \in A\cup B\mbox{ and }i=1\\
R_{(A',B')}(r_{(A,B)}(x),0) & \mbox{ in another case} \\
\end{cases} \\
&=
\begin{cases}
(x,1) & \mbox{ if }x \in A\cup B\mbox{ and }i=1\\
(r_{(A',B')}(r_{(A,B)}(x)),0) & \mbox{ in another case} \\
\end{cases} \\
&=
\begin{cases}
(x,1) & \mbox{ if }x \in A\cup B\mbox{ and }i=1\\
(r_{(A,B)}(x),0) & \mbox{ in another case} \\
\end{cases} \\
&= R_{(A,B)}(x,i).
\end{align*}
 Therefore, $R_{(A,B)}=R_{(A,B)}\circ R_{(A',B')}=R_{(A',B')}\circ R_{(A,B)}$ whenever  $(A,B)\preceq (A',B')$.

\item[$(iii)$] Let $\langle(A_n,B_n)\rangle_{ n<\omega}\subseteq \Gamma$ be such that $(A_n,B_n)\preceq(A_{n+1},B_{n+1})$.  Let us observe that $\sup\{ (A_n,B_n): n<\omega \}=(\sup_\Gamma\{A_n:n<\omega\},\sup_\Gamma\{B_n:n<\omega\})$. For simplify put  $A=\sup_\Gamma\{A_n:n<\omega\}$ and $B=\sup_\Gamma\{B_n:n<\omega\}$. Fix $(x,i)\in AD(X)$. We will prove that $R_{(A,B)}(x,i)=\lim_{n\rightarrow \infty}R_{(A_n,B_n)}(x,i)$. In fact, if $i=0$, then $R_{(A,B)}(x,0)=(r_{(A,B)}(x),0)$ and $R_{(A_n,B_n)}(x,0)=(r_{(A_n,B_n)}(x),0)$ for all $n < \omega$. Since  $r_{(A,B)}(x)=\lim_{n\rightarrow\infty}r_{(A_n,B_n)}(x)$, we conclude that $R_{(A,B)}(x,0)=  \lim_{n\rightarrow \infty}R_{(A_n,B_n)}(x,0)$.
Now, we consider the case when $i=1$. If $x\in A\cup B$, then there is $n_0<\omega$ such that  $x\in A_n\cup B_n$ for all $n\geq n_0$.
 Hence, $R_{(A,B)}(x,1)=(x,1)=R_{(A_n,B_n)}(x,1)$, for $n\geq n_0$.  It follows that $R_{(A,B)}(x,1)=  \lim_{n\rightarrow \infty}R_{(A_n,B_n)}(x,1)$.
  If $x\notin A\cup B$, then $R_{(A,B)}(x,1)=  (r_{(A,B)}(x),0)$ and $R_{(A_n,B_n)}(x,1)=(r_{(A_n,B_n)}(x),0)$, for every $n < \omega$.
  Since the equality $r_{(A,B)}(x)=\lim_{n\rightarrow\infty}r_{(A_n,B_n)}(x)$ holds,
$R_{(A,B)}(x,1)=  \lim_{n\rightarrow \infty}R_{(A_n,B_n)}(x,1).$

\item[$(iv)$] Let $(x,i)\in AD(X)$. First, we notice that the equality $x=\lim_{(A,B)\in \Gamma}r_{(A,B)}(x)$ implies that $(x,0)=\lim_{(A,B)\in \Gamma}R_{(A,B)}(x,0)$. Now let  $x\in Y$. By cofinality of $\Gamma$, there is $(A,B)\in \Gamma$ such that $\{x\}\subseteq A$. Using condition $(***)$, we have $(x,1)=(r_{(A,B)}(x),1)=R_{(A,B)}(x,1)$. Now, we suppose $x\in X\setminus Y$. By using  the confinality of $\Gamma$, there is $(A,B)\in \Gamma$ with $\{x\}\subseteq B$.  It follows, $(x,1)=R_{(A,B)}(x,1)$.  With all, $(x,i)=\lim_{(A,B)\in \Gamma}R_{(A,B)}(x,i)$.
\end{enumerate}

\end{proof}
 From the proof of the before theorem, we can deduce the next corollary.
\begin{corollary}\label{principalvaldivia}
   Let be  $X$ a compact space such that $AD(X)$ admits a commutative $r$-skeleton.  Then   the $r$-skeleton   $\{r_{(A,B)}:(A,B)\in \Gamma\}$ on $X$ obtained in the Theorem \ref{principal} is commutative.
\end{corollary}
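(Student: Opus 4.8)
The plan is to follow the necessity part of Theorem \ref{principal} step by step and check that each operation performed there preserves \emph{pairwise} commutativity of a family of retractions. Write the given commutative $r$-skeleton on $AD(X)$ as $\{\rho_t : t \in \Delta\}$, so that $\rho_t \circ \rho_{t'} = \rho_{t'} \circ \rho_t$ for all $t, t' \in \Delta$, not merely for comparable indices.

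First I would observe that the re-indexed skeleton $\{R_C : C \in [\hat{Y}]^{\leq \omega}\}$ furnished by Theorem \ref{numerables} is again commutative. Indeed, in that proof each retraction has the form $R_C = \rho_{\psi(C)}$ for the function $\psi$ of Lemma \ref{rey1}; hence every $R_C$ is a member of the original commutative family, and therefore $R_C \circ R_{C'} = \rho_{\psi(C)} \circ \rho_{\psi(C')} = \rho_{\psi(C')} \circ \rho_{\psi(C)} = R_{C'} \circ R_C$ for all $C, C'$. The same remark applies to the family $\{R_{(A,B)} : (A,B) \in \Gamma'\}$ defined by $R_{(A,B)} = R_{A \times \{0,1\} \cup B \times \{1\}}$, since each of its members is one of the $R_C$; thus this family on $AD(X)$ is commutative as well.

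Next I would descend to the cofinal $\sigma$-closed set $\Gamma = \{(A,B) \in \Gamma' : R_{(A,B)}(X_0) \subseteq X_0\}$ of Remark \ref{obscuth}. For $(A,B) \in \Gamma$ the retraction $R_{(A,B)}$ leaves $X_0$ invariant, so for any two such indices one has $(R_{(A,B)}\upharpoonright_{X_0}) \circ (R_{(A',B')}\upharpoonright_{X_0}) = (R_{(A,B)} \circ R_{(A',B')})\upharpoonright_{X_0}$, and the commutativity already established on all of $AD(X)$ passes to the restricted family on $X_0$. Finally, each $r_{(A,B)}$ is the conjugate $(\pi\upharpoonright_{X_0}) \circ (R_{(A,B)}\upharpoonright_{X_0}) \circ (\pi\upharpoonright_{X_0})^{-1}$ of $R_{(A,B)}\upharpoonright_{X_0}$ by the homeomorphism $\pi\upharpoonright_{X_0} : X_0 \to X$; since conjugation by a fixed homeomorphism carries commuting pairs to commuting pairs, the family $\{r_{(A,B)} : (A,B) \in \Gamma\}$ is commutative, as claimed.

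The argument is essentially bookkeeping, and I do not expect a genuine obstacle. The one point deserving explicit verification is the second step, namely that the passage through Theorem \ref{numerables} does not destroy commutativity; but this is immediate once one notices that the new retractions are literally the old ones re-indexed, so no commutation identity beyond those already assumed for $\{\rho_t\}$ ever has to be proved.
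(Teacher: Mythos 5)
Your proof is correct and takes exactly the route the paper intends: the paper gives no separate argument, saying only that the corollary is deduced from the proof of Theorem \ref{principal}, and your bookkeeping supplies precisely that deduction. In particular, your three observations --- that the retractions $R_C=\rho_{\psi(C)}$ from Theorem \ref{numerables} and the $R_{(A,B)}$ are literal members of the original pairwise-commuting family, that restriction to the $R_{(A,B)}$-invariant set $X_0$ turns composites into composites of restrictions, and that conjugation by the fixed homeomorphism $\pi\upharpoonright_{X_0}$ preserves commuting pairs --- are the implicit content of the paper's one-line justification.
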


It is well known that there are  Valdivia compact spaces wich Duplicate  Alexandroff is not Valdivia compact. If we have a commutative $r$-skeleton   $\{r_{(A,B)}:(A,B)\in \Gamma\}$ on $X$,  the conditions given in the Theorem  \ref{principal} are not  clear for extend  to a commutative $r$-skeleton on $AD(X)$. In the next result, we add one more condition for we can to extend commutative $r$-skeletons.

\begin{corollary}
Let be  $X$ a compact space and  $\{r_{(A,B)}:(A,B)\in \Gamma\}$ a commutative $r$-skeleton  on $X$ as in the Theorem \ref{principal} wich satisfied
\begin{itemize}
\item[$(****)$] for every $(A,B),(A',B')\in \Gamma$, $r_{(A,B)}(x)=r_{(A',B')}(r_{(A,B)}(x))$, for each $x\in B'\setminus B$.
\end{itemize}
Then $AD(X)$ admits a commutative $r$-skeleton.
\end{corollary}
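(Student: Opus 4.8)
The plan is to show that the very $r$-skeleton $\{R_{(A,B)}:(A,B)\in\Gamma\}$ produced on $AD(X)$ in the sufficiency part of Theorem \ref{principal} becomes commutative once $(****)$ is assumed. Recall that $R_{(A,B)}(x,i)=(x,1)$ if $x\in A\cup B$ and $i=1$, while $R_{(A,B)}(x,i)=(r_{(A,B)}(x),0)$ otherwise. All four $r$-skeleton axioms were already verified in Theorem \ref{principal} (axiom $(ii)$ there only concerns comparable pairs), so the single remaining task is to check that $R_{(A,B)}\circ R_{(A',B')}=R_{(A',B')}\circ R_{(A,B)}$ for arbitrary, not necessarily comparable, pairs $(A,B),(A',B')\in\Gamma$.

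First I would dispose of the coordinate $i=0$. Here $R_{(A',B')}(x,0)=(r_{(A',B')}(x),0)$, and applying $R_{(A,B)}$ keeps the second coordinate $0$, so $R_{(A,B)}\circ R_{(A',B')}(x,0)=(r_{(A,B)}(r_{(A',B')}(x)),0)$; symmetrically the other composition yields $(r_{(A',B')}(r_{(A,B)}(x)),0)$. Since $\{r_{(A,B)}:(A,B)\in\Gamma\}$ is commutative, these coincide.

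The real work is the coordinate $i=1$, which I would split according to the membership of $x$ in $A\cup B$ and in $A'\cup B'$, giving four regions. On the diagonal region $x\in(A\cup B)\cap(A'\cup B')$ both compositions return $(x,1)$; on the region $x\notin(A\cup B)\cup(A'\cup B')$ they return $(r_{(A,B)}(r_{(A',B')}(x)),0)$ and $(r_{(A',B')}(r_{(A,B)}(x)),0)$ respectively, which agree by commutativity. The two off-diagonal regions are the heart of the matter. Consider $x\in(A'\cup B')\setminus(A\cup B)$: then $R_{(A,B)}\circ R_{(A',B')}(x,1)=(r_{(A,B)}(x),0)$ while $R_{(A',B')}\circ R_{(A,B)}(x,1)=(r_{(A',B')}(r_{(A,B)}(x)),0)$, so I must show $r_{(A,B)}(x)$ is fixed by $r_{(A',B')}$. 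If $x\in A'$ then $x\in Y$ and $(***)$ gives $r_{(A',B')}(x)=x$, whence commutativity yields $r_{(A',B')}(r_{(A,B)}(x))=r_{(A,B)}(r_{(A',B')}(x))=r_{(A,B)}(x)$. If instead $x\in B'$, then $x\in B'\setminus B$ (because $x\notin A\cup B$ forces $x\notin B$), and this is exactly the situation governed by $(****)$, which delivers $r_{(A,B)}(x)=r_{(A',B')}(r_{(A,B)}(x))$. The symmetric region $x\in(A\cup B)\setminus(A'\cup B')$ is treated identically after exchanging the two pairs, using $(***)$ on the $A$-part and $(****)$ (with the roles of $B$ and $B'$ interchanged) on the $B$-part.

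The step I expect to be the genuine obstacle is precisely the $B$-part of the off-diagonal regions. For points of $A$, which lie in the induced space $Y$, the required fixed-point relation comes for free from $(***)$ together with commutativity; but a point $x\in B\subseteq X\setminus Y$ need not be fixed by any retraction, so commutativity alone cannot force $r_{(A',B')}(r_{(A,B)}(x))=r_{(A,B)}(x)$. Condition $(****)$ is exactly the extra hypothesis that repairs this, and recognizing that the four-region case analysis reduces the whole commutativity question to this single identity is the crux of the argument.
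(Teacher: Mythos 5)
Your proposal is correct and is exactly the argument the paper intends (the paper leaves this corollary unproved, as it follows from the construction $R_{(A,B)}$ in the sufficiency half of Theorem \ref{principal}): one only needs commutativity for arbitrary pairs, and your four-region analysis at $i=1$, using $(***)$ plus commutativity for points of $A$ or $A'$ and $(****)$ precisely for the points of $B\setminus B'$ and $B'\setminus B$, is the intended completion. No gaps.
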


The next example is another application of Theorem \ref{principal}.
\begin{corollary}
Let $X$ be a compact space. If $AD(X)$ admits an $r$-skeleton, then  the induced space $Y=\pi(\hat{Y}\cap X_0)$ of $X$ is unique. That is, if $Y'$ is an induced set by an arbitrary $r$-skeleton on $X$, then $Y'=Y$.
\end{corollary}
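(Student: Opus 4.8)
The plan is to prove uniqueness by giving an \emph{intrinsic} description of the induced space that refers to no particular $r$-skeleton, and then to show that every induced space on $X$ must coincide with it. Let $S$ be the set of all $x\in X$ that are either isolated in $X$ or the limit of some nontrivial sequence (a sequence $\langle x_n\rangle_{n<\omega}$ with $x_n\neq x$ and $x_n\to x$). I will show that every induced space $Y'$ of $X$ satisfies $Y'\subseteq S$, that the particular set $Y=\pi(\hat{Y}\cap X_0)$ satisfies $S\subseteq Y$, and finally that the inclusion $Y'\subseteq Y$ forces $Y=Y'$. Recall from Kubi\'s's theorem quoted in the introduction that every induced space is dense, Fr\'echet--Urysohn and countably closed in $X$; these three properties are essentially all I will use, so the conditions $(*)$--$(***)$ will not be needed beyond guaranteeing that $Y$ is a genuine induced space.

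First I would check $Y'\subseteq S$ for an arbitrary induced space $Y'$. If $x\in Y'$ is isolated in $X$ then $x\in S$ trivially; if $x\in Y'$ is non-isolated in $X$, then density of $Y'$ makes $x$ non-isolated in $Y'$, so $x\in cl_{Y'}(Y'\setminus\{x\})$, and the Fr\'echet--Urysohn property produces a sequence in $Y'\setminus\{x\}$ converging to $x$, whence $x\in S$. Next I would verify $S\subseteq Y$, and this is the one place where the Alexandroff duplicate genuinely enters. All points of $X_1$ are isolated in $AD(X)$, hence lie in $\hat{Y}$; and each isolated point $x$ of $X$ yields an isolated point $(x,0)$ of $AD(X)$, so $(x,0)\in\hat{Y}$ and $x\in Y$. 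If instead $x_n\to x$ with $x_n\neq x$, then $(x_n,1)\to(x,0)$ in $AD(X)$, the points $(x_n,1)$ all belong to $X_1\subseteq\hat{Y}$, and countable closedness of $\hat{Y}$ gives $(x,0)\in\hat{Y}$, i.e. $x\in Y$. Combining the two inclusions, $Y'\subseteq S\subseteq Y$.

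It then remains to upgrade $Y'\subseteq Y$ to equality. Here I would use that $Y$ is itself an induced space of $X$, via the skeleton $\{r_{(A,B)}\}$ produced in the necessity part of Theorem \ref{principal}, so that $Y$ is Fr\'echet--Urysohn while $Y'$ is countably closed. Given $x\in Y$, density of $Y'$ in $X$ gives $x\in cl_Y(Y')$; the Fr\'echet--Urysohn property of $Y$ yields a sequence $\langle y_n\rangle_{n<\omega}\subseteq Y'$ with $y_n\to x$; and countable closedness of $Y'$ forces $x\in cl_X(\{y_n:n<\omega\})\subseteq Y'$. Thus $Y\subseteq Y'$, and together with the previous paragraph $Y=Y'$.

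I expect the only delicate point to be the inclusion $S\subseteq Y$, since it is the single step that actually invokes the hypothesis on $AD(X)$: one must observe that the isolated points $X_1$ of the duplicate are automatically absorbed by $\hat{Y}$ and then exploit countable closedness to push their sequential limits $(x,0)$ into $\hat{Y}$. The remaining two inclusions are purely formal consequences of density, the Fr\'echet--Urysohn property and countable closedness, and in particular the final upgrading argument shows the general principle that, among dense Fr\'echet--Urysohn countably closed subspaces, a smaller one dense in a larger one must equal it.
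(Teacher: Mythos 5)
Your proof is correct, but it takes a genuinely different route from the paper. The paper argues by contradiction: assuming $Y\neq Y'$, it invokes Lemma 3.2 of \cite{cuth1} to find a nonempty open $V$ with $V\cap(Y\cap Y')=\emptyset$, picks a countably infinite $B\subseteq W\cap Y'$ with $cl_X(W)\subseteq V$, and then plays countable closedness of $Y'$ against the condition $(**)$ of Theorem \ref{principal} (i.e., $cl_X(B)\setminus B\subseteq Y$, which is Lemma \ref{teo1}) to reach a contradiction. You instead give a direct proof through the intrinsic set $S$ of points that are isolated or nontrivial sequential limits, proving $Y'\subseteq S\subseteq Y$ and then upgrading to equality via the Fr\'echet--Urysohn property of $Y$ and countable closedness of $Y'$. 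The common core is the same duplicate mechanism --- $X_1$ consists of isolated points, hence $X_1\subseteq\hat{Y}$, and countable closedness of $\hat{Y}$ pushes limits $(x,0)$ of sequences from $X_1$ into $\hat{Y}$ --- but where the paper packages this as condition $(**)$ and routes the uniqueness through C\'uth's density lemma, you inline the mechanism and replace that lemma with the elementary principle that a dense countably closed subspace contained in a Fr\'echet--Urysohn induced space must exhaust it. Your approach buys two things the paper's does not make explicit: a skeleton-free intrinsic description of the induced space (indeed, since $Y$ is itself induced by the skeleton from the necessity part of Theorem \ref{principal}, your two inclusions give $Y=S$ exactly), and it sidesteps a small subtlety in the paper's argument, namely that the contradiction there tacitly requires $cl_X(B)\setminus B\neq\emptyset$, which needs condition $(*)$ (an infinite $B$ discrete in $X\setminus Y$ cannot be compact); your argument never needs this. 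The paper's proof, in exchange, is shorter given the machinery already established.
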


\begin{proof}
Let $Y'$ be a subset of $X$ induced by an $r$-skeleton on $X$. We suppose that $Y\neq Y'$.  Since $Y\neq Y'$,  by Lemma 3.2 from \cite{cuth1}, we have that  $Y\cap Y'$ cannot be dense in $X$. Hence, there is a nonempty  open subset $V$ of $X$ such that $V\cap (Y\cap Y')=\emptyset$. Let $W$ a nonempty open subset such that $cl_X(W)\subseteq V$. By density of $Y'$, there is $B\subseteq W \cap Y'$ infinite and countable. Since $Y'$  is countably closed,  we must have $cl_{X}(B)\subseteq cl_X(W)\cap Y'\subseteq V\cap Y'$. On the other hand,  Theorem \ref{principal} implies that $cl_{X}(B)\setminus B\subseteq Y$, but this is impossible since    $V\cap (Y\cap Y')=\emptyset$. Therefore, $Y=Y'$.
\end{proof}

As a consequence of the previous corollary,  if $AD(X)$ has an $r$-skeleton, then $X$ is not  a super Valdivia space (for the definition of super Valdivia see the paper \cite{kalenda1}). In particular, Alexandroff duplicate  of $[0,1]^{\kappa}$ does not admit  an $r$-skeleton, for every $\kappa\geq \omega_1$.

\bigskip
Remember that for a up-directed $\sigma$-complete partially ordered set $\Gamma$ and a set $Y$, a function $\psi:\Gamma\rightarrow [Y]^{\leq \omega}$ is called monotone provided that:
\begin{itemize}
\item if $s, t\in \Gamma$, then $\psi(s)\subseteq \psi(t)$; and
\item if $\{s_n:n<\omega\}\subseteq \Gamma$ with $s_n\leq s_{n+1}$ for each $n<\omega$, then $\psi(\sup_{n<\omega}s_n)=\bigcap_{n<\omega}\psi(s_n)$.
\end{itemize}   Hence, in terms of $\omega$-monotonous functions we have the next result.

\begin{theorem}\label{novo2}
Let $X$ be a compact space which admits an $r$-skeleton with induced space $Y$. Let us suppose that  $\{r_A:A\in [Y]^{\leq \omega}\}$ is the $r$-skeleton obtained by Theorem \ref{numerables}. If the conditions $(*)$ and $(**)$ from above hold,  and  there is $\psi:[X\setminus Y]^{\leq\omega}\rightarrow [Y]^{\leq \omega}$ $\omega$-monotonous such that for $B\in [X\setminus Y]^{\leq \omega}$, $cl_X(B)\setminus B \subseteq r_{\psi(B)}(X)$. Then $AD(X)$ has an $r$-skeleton.
\end{theorem}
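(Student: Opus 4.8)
The plan is to reduce everything to the sufficiency half of Theorem \ref{principal} by manufacturing, out of the given data, an $r$-skeleton on $X$ indexed by pairs. I would set $\Gamma:=[Y]^{\leq\omega}\times[X\setminus Y]^{\leq\omega}$ ordered by $\preceq$; this is up-directed and $\sigma$-complete with $\sup_{n<\omega}(A_n,B_n)=(\bigcup_{n<\omega}A_n,\bigcup_{n<\omega}B_n)$ for increasing sequences, and it is trivially $\sigma$-closed and cofinal in itself. Using the skeleton $\{r_A:A\in[Y]^{\leq\omega}\}$ from Theorem \ref{numerables} and the $\omega$-monotonous map $\psi$, I define
\[
r_{(A,B)}:=r_{A\cup\psi(B)}\qquad\text{for }(A,B)\in\Gamma,
\]
which is legitimate because $A\cup\psi(B)\in[Y]^{\leq\omega}$. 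The goal is to show that $\{r_{(A,B)}:(A,B)\in\Gamma\}$ is an $r$-skeleton on $X$ with induced space $Y$ satisfying $(*)$, $(**)$ and $(***)$, and then to invoke Theorem \ref{principal}.

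The routine verifications rest on one observation I would record first: from condition $(ii)$ of Definition \ref{skeleton}, $C\subseteq C'$ implies $r_C(X)\subseteq r_{C'}(X)$, since any point fixed by $r_C$ is fixed by $r_{C'}$. Granting this, condition $(i)$ is clear, and for condition $(ii)$ the relation $(A,B)\preceq(A',B')$ yields $A\cup\psi(B)\subseteq A'\cup\psi(B')$ by monotonicity of $\psi$, so the commutation identities descend from $\{r_A\}$. Condition $(***)$ also follows: $A\subseteq A\cup\psi(B)\subseteq r_{(A,B)}(X)$, while the hypothesis $cl_X(B)\setminus B\subseteq r_{\psi(B)}(X)$ together with image monotonicity gives $cl_X(B)\setminus B\subseteq r_{A\cup\psi(B)}(X)=r_{(A,B)}(X)$. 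Conditions $(*)$ and $(**)$ are assumed; the induced space is $\bigcup_{(A,B)\in\Gamma}r_{A\cup\psi(B)}(X)=Y$ because $\{A\cup\psi(B):(A,B)\in\Gamma\}$ is cofinal in $[Y]^{\leq\omega}$ (take $B=\emptyset$); and condition $(iv)$ follows from the same cofinality, by choosing $A_0$ witnessing $x=\lim_{A}r_A(x)$ and comparing against $(A_0,\emptyset)\in\Gamma$.

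The decisive step, and the only place where the full strength of the $\omega$-monotonicity of $\psi$ is needed, is condition $(iii)$. For an increasing sequence $(A_n,B_n)$ with supremum $(A,B)=(\bigcup_n A_n,\bigcup_n B_n)$, establishing $r_{(A,B)}(x)=\lim_{n\to\infty}r_{(A_n,B_n)}(x)$ for each $x$ reduces, via condition $(iii)$ for $\{r_A\}$, to the identity
\[
A\cup\psi(B)=\bigcup_{n<\omega}\bigl(A_n\cup\psi(B_n)\bigr).
\]
The inclusion $\supseteq$ is immediate from monotonicity of $\psi$, whereas $\subseteq$ amounts precisely to $\psi(\bigcup_n B_n)\subseteq\bigcup_n\psi(B_n)$, that is, to the preservation of countable suprema encoded in the $\omega$-monotonicity of $\psi$. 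I expect this identity to be the main obstacle and the reason the $\omega$-monotonous hypothesis cannot be dropped: a naive choice of $\psi$ satisfying only $cl_X(B)\setminus B\subseteq r_{\psi(B)}(X)$ would break the $\sigma$-continuity of the new skeleton. Once $(iii)$ is in hand, all hypotheses of Theorem \ref{principal} hold, and therefore $AD(X)$ admits an $r$-skeleton.
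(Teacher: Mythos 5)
Your proposal is correct and is essentially the paper's own argument: the paper's proof of Theorem \ref{novo2} consists precisely of setting $r_{(A,B)}:=r_{A\cup\psi(B)}$ for $(A,B)\in[Y]^{\leq\omega}\times[X\setminus Y]^{\leq\omega}$ and invoking Theorem \ref{principal}, with all verifications left unstated. The details you supply are sound — in particular the reduction of condition $(iii)$ to $A\cup\psi(B)=\bigcup_{n<\omega}\bigl(A_n\cup\psi(B_n)\bigr)$, which tacitly (and correctly) reads the $\bigcap$ in the paper's definition of a monotone map as the evidently intended $\bigcup$.
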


\begin{proof}
Let $[Y]^{\leq \omega}\times[X\setminus Y]^{\leq \omega}$. For each $(A,B)\in \Gamma'$, let $r_{(A,B)}:X\rightarrow X$ the function defined  by $r_{(A,B)}=r_{A\cup \psi(B)}$. The family $\{r_{(A,B)}:(A,B)\in \Gamma'\}$ is an $r$-skeleton on $X$ wich satisfied the conditions $(*)-(***)$. Therefore, using Theorem \ref{principal}, $AD(X)$ has an $r$-skeleton.
\end{proof}

The next is an example of aplication of Theorem \ref{principal}.
\begin{example}
Let $\kappa$ an infinite cardinal number. We considerer the $r$-skeleton given  in the article \cite{kubis1}, $\{r'_A:A\in \mathcal{A}\}$, where $\mathcal{A}$ is the  collection of all closed countable sets of $[0,\kappa]$ such that if $A\in \mathcal{A}$, then $0\in A$ and every isolated point of $A$ is isolated in $[0,\kappa]$. And the retractions are defined by $r'_A(x)=\max\{y\in A:y\leq x\}$, for each $A\in \mathcal{A}$. For this $r$-skeleton, $Y=\bigcup\mathcal{A}$ is the induced space. By propierties of the ordinal space $[0,\kappa]$ it follows that $(*)$ and $(**)$ are hold. By Theorem \ref{numerables}, from $\{r'_A:A\in \mathcal{A}\}$ we obtain an $r$-skeleton $\{r_A:A\in [Y]^{\leq \omega}\}$ with  induced space $Y$ that satisfies $(*)$ and $(**)$.   Now, if  $\psi:[X\setminus Y]^{\leq \omega}\rightarrow [Y]^{\leq \omega}$ is the function defined by $\psi(B)=\{\beta+1:\beta\in B\}$, for each $B\in [X\setminus Y]^{\leq \omega}$,  then $\psi$ is  $\omega$-monotonous. We observe that for $B\in [X\setminus Y]^{\leq \omega}$, $cl_X(B)\setminus B\subseteq  cl_X(\psi(B))\subseteq r_{\psi(B)}([0,\kappa])$. As a consequence of Theorem \ref{novo2}, we have that $AD([0,\kappa])$ has an $r$-skeleton.
\end{example}

 We do not know whether or not the monotony of  Theorem \ref{novo2} is sufficient.


\end{document}